\newcommand*\widefbox[1]{\fbox{\hspace{1em}#1\hspace{1em}}}
\DeclareSymbolFont{newfont}{OML}{cmm}{m}{it}
\DeclareMathSymbol{\Epsilon}{3}{newfont}{15}
\DeclareMathSymbol{\Varrho}{3}{newfont}{37}
\title{\LARGE \bf
A Controlled Particle Filter for Global Optimization}
\author{Chi Zhang, Amirhossein Taghvaei and Prashant G. Mehta
\thanks{C. Zhang, A. Taghvaei and P. G. Mehta are with the Coordinated Science Laboratory and
the Department of Mechanical Science and Engineering at the University of
Illinois at Urbana-Champaign (UIUC)
{\tt\small czhang54@illinois.edu; taghvae2@illinois.edu; mehtapg@illinois.edu}}
\thanks{Financial support from the NSF CMMI grants 1334987 and 1462773 is gratefully acknowledged.}%
\thanks{The conference versions of this paper appear in~\cite{Chi_CDC13}.}
}
\def\qed{\hspace*{\fill}~\IEEEQED\par\endtrivlist\unskip}
\def\Re{\mathbb{R}}
\def\argmin{\mathop{\text{\rm arg\,min}}}
\def\ind{\text{\rm\large 1}}
\def\varble{\,\cdot\,}
\def\varble{\,\cdot\,}
\def\Sec#1{Sec.~\ref{#1}}
\def\notes#1{\marginpar{\tiny #1}\typeout{Notes!
Notes!
Notes!
}}
\renewcommand{\notes}[1]{\typeout{notes!}}
\def\bbbone{{\mathchoice {\rm 1\mskip-4mu l} {\rm 1\mskip-4mu l}
{\rm 1\mskip-4.5mu l} {\rm 1\mskip-5mu l}}}
\def\ind{\bbbone}
\newcommand{\field}[1]{\mathbb{#1}}
\newcommand{\tr}{\mbox{tr}}
\def\Re{\field{R}}
\def\Sec#1{Sec.~\ref{#1}}
\def\clN{{\cal N}}
\def\calP{{\cal P}}
\def\K{{\sf K}} 
\def\Dt{\Delta t}
\def\R{\mathbb{R}} 
\newcommand{\kepsN}{{k}_{\epsilon}^{(N)}}
\newcommand{\phiepsN}{{\phi}^{(N)}_\epsilon}
\newcommand{\phiM}{\phi^{(M)}}
\newcommand{\phiMN}{\phi^{(M,N)}}
\newcommand{\TepsN}{{T}^{(N)}_\epsilon}
\newcommand{\nepsN}{{n}^{(N)}_\epsilon}
\newcommand{\geps}{{g}_{\epsilon}}
\def\exp{\text{exp}}
\newcommand{\Rom}[1]{\expandafter\@slowromancap\romannumeral #1@}
\newcounter{rmnum}
\newenvironment{romannum}{\begin{list}{{\upshape (\roman{rmnum})}}{\usecounter{rmnum}
\setlength{\leftmargin}{6pt}
\setlength{\rightmargin}{4pt}
\setlength{\itemindent}{-1pt}
}}{\end{list}}
\newcounter{anum}
\newcommand{\ud}{\,\mathrm{d}}
\newcommand{\pr}{\rho}
\newcommand{\expect}[1]{{\sf E}[#1]}
\newcommand{\newP}[1]{\medskip\noindent{\bf #1:}}
\newtheorem{theorem}{Theorem}
\newtheorem{corollary}{Corollary}
\newtheorem{proposition}{Proposition}
\newtheorem{lemma}{Lemma}
\newtheorem{example}{Example}
\newtheorem{remark}{Remark}
\newtheorem{proof}{Proof}
\def\Sec#1{Sec.~\ref{#1}}
\newsavebox\myboxA
\newsavebox\myboxB
\newlength\mylenA
\newcommand*\xoverline[2][0.75]{%
    \sbox{\myboxA}{$\m@th#2$}%
    \setbox\myboxB\null
    \ht\myboxB=\ht\myboxA%
    \dp\myboxB=\dp\myboxA%
    \wd\myboxB=#1\wd\myboxA
    \sbox\myboxB{$\m@th\overline{\copy\myboxB}$}
    \setlength\mylenA{\the\wd\myboxA}
    \addtolength\mylenA{-\the\wd\myboxB}%
    \ifdim\wd\myboxB<\wd\myboxA%
       \rlap{\hskip 0.5\mylenA\usebox\myboxB}{\usebox\myboxA}%
    \else
        \hskip -0.5\mylenA\rlap{\usebox\myboxA}{\hskip 0.5\mylenA\usebox\myboxB}%
    \fi}
\def\bN{\bar{N}}
\begin{document}
\normalem
\maketitle

\vspace{-0.5in}
\begin{abstract}

A particle filter is introduced to numerically approximate a solution of
the global optimization problem.  The theoretical significance of this
work comes from its variational aspects: (i) the proposed particle
filter is a controlled interacting particle system where the control
input represents the solution of a mean-field type optimal control
problem; and (ii) the associated density transport is shown to be a
gradient flow (steepest descent) for the optimal value function, with
respect to the Kullback--Leibler divergence.  The optimal control
construction of the particle filter is a significant departure from
the classical importance sampling-resampling based approaches.    
There are several practical advantages: (i) resampling, reproduction,
death or birth of particles is avoided; (ii) simulation variance can
potentially be reduced by applying feedback control principles; and
(iii) the parametric approximation naturally arises as a special case.
The latter also suggests systematic approaches for numerical
approximation of the optimal control law.  The theoretical results are
illustrated with numerical examples.

\end{abstract}

\section{Introduction}
\label{sec:intro}

We consider the global optimization problem:
\begin{equation*}
 \min_{x \in \Re^d} ~h(x),
\end{equation*}
where $h:\Re^d\rightarrow\Re$ is a real-valued function.  This
paper is concerned with gradient-free simulation-based 
algorithms to obtain the {\em global minimizer} 
\[
\bar{x} = \argmin_{x\in \Re^d} \;\; h(x).
\] 
It is assumed that such a minimizer exists and is unique.

A Bayesian approach to solve the problem is as follows: Given an
everywhere positive initial density (prior) $p_0^*$, define the
(posterior) density at a positive time $t$ by
\begin{equation}
p^*(x,t) :=\frac{ p_0^*(x) \, \exp{(-\beta h(x)\,t )} }{\int p_0^*(y) \, \exp{(-\beta h(y) \,t)} \ud y},
\label{eqn:evolution}
\end{equation}
where $\beta$ is a positive constant parameter. 
Under certain additional technical assumptions on $h$ and 
$p_0^*$, the density $p^*(x,t)$ weakly converges to the Dirac delta measure at
$\bar{x}$ as time $t\rightarrow\infty$ (See Appendix~\ref{apdx:exact}).  
The Bayesian approach is attractive because it
can be implemented recursively:
Consider a finite time interval $[0,T]$ with an associated
discrete-time sequence $\{t_0,t_1, t_2,\hdots,t_{\bN}\}$ of sampling
instants, with $0=t_0 < t_1 < \ldots < t_{\bN} = T$, and increments given by $\Delta t_n \doteq t_n - t_{n-1}, n =
1,\ldots,\bN$. The posterior
distribution is expressed recursively as:
\begin{equation}
\begin{aligned}
\text{Initialization:}\quad\rho_0(x) & = p_0^*(x),\\
\text{Update:}\quad\rho_n(x) & = \frac{\rho_{n-1} (x) \, \exp(- \beta h(x) \Delta t_n) }{\int \rho_{n-1}
  (y) \, \exp(- \beta h(y) \Delta t_n) \ud y}.
\end{aligned}
\label{eq:Bayes_with_Yn}
\end{equation}
Note that $\{\rho_n\}$ is a sequence of probability densities.  At
time $t_n$, $\rho_n(x) = p^*(x,t_n)$ by construction.

A particle filter is a simulation-based algorithm to sample
from $\rho_n$.  A particle filter is comprised of $N$ stochastic processes $\{X^i_t : 1\le i \le N\}$. The vector $X^i_t \in
\Re^d$ is the state for the $i^{\text{th}}$ particle at time $t$. For
each time $t$, the empirical distribution formed by the ensemble is used to approximate the posterior  distribution.
The empirical distribution is defined for any measurable set $A\subset\Re^d$ by
\begin{equation*}
p^{(N)}(A,t) = \frac{1}{N}\sum_{i=1}^N \ind\{ X^i_t\in A\}. 
\label{e:piN}
\end{equation*}
A sequential importance sampling resampling (SISR)
particle filter implementation involves the following recursive steps:
\begin{equation}
\begin{aligned}
\text{Initialization:}\quad & X^i_0\;\; \mathop{\sim}^\text{{i.i.d.}}\;\; p_0^*,\\
\text{Update:}\quad & X^i_n \;\;\mathop{\sim}^{\text{i.i.d.}} \;\; \sum_{i=1}^N w^i_n
\delta_{X^{i}_{n-1}}, 
\end{aligned}
\label{eq:SIR}
\end{equation}
where $w^i_n \propto \exp(- \beta h(X^{i}_{n-1}) \Delta t_n)$ are referred
to as the importance weights and $\delta_{z}$ denotes the Dirac-delta at
$z\in\Re^d$.  In practice, the importance weights $w^i_n$ can
potentially suffer from large variance.  To address this problem, several
extensions have been described in literature based on consideration of suitable
sampling (proposal) distributions and efficient resampling schemes;
cf.,~\cite{doucet09,smith2013}.  

The use of probabilistic models to
derive recursive sampling algorithms is by now a standard
solution approach to the global optimization problem: The
model~\eqref{eqn:evolution} appears in~\cite{Wang10} with
closely related variants given in~\cite{CE99,hu2007model}.
Importance sampling type schemes, of the form~\eqref{eq:SIR}, based on these and more general (stochastic)
models appear
in~\cite{zhou2014particle,zhou2013sequential,liu2015posterior,Bin16,stinis2012stochastic}.

\medskip

In this paper, we present an alternate control-based approach to the construction and
simulation of the particle filter for global optimization.  
In our approach, the particle filter is a {\em controlled} interacting
particle system where the dynamics of the $i^{\text{th}}$ particle
evolve according to
\begin{equation}
\frac{\ud X_t^i }{\ud t} = u(X_t^i,t),\quad X_0^i\sim p_0^*,
\label{eqn:particle_dyn}
\end{equation}
where the {\em control function} $u(x,t)$ is
obtained by solving a weighted Poisson equation:
\begin{equation}
\label{eqn:EL_phi_intro}
\begin{aligned}
-\nabla \cdot (\rho(x) \nabla \phi(x)) & = (h (x)-\hat{h}) \rho(x),\quad x\in \Re^d,
\\
\int \phi (x) \rho(x) \ud x & = 0,
\end{aligned}
\end{equation}
where  $\hat{h} :=
\int h(x)\rho(x)\ud x$, $\nabla$ and $\nabla \cdot $ denote the
gradient and the divergence operators, respectively, and at time $t$, 
$\rho(x)=p(x,t)$ denotes the density of $X_t^i$\footnote{Although this
  paper is limited to $\Re^d$, the proposed algorithm is applicable to
  global optimization problems on differential manifolds, e.g., matrix Lie
  groups (For an intrinsic form of the Poisson equation, see~\cite{Chi_ACC16}).  
  For domains with boundary, the pde is accompanied by a Neumann boundary condition:
\[
\nabla \phi(x) \cdot {n}(x) = 0
\] 
for all $x$ on the boundary of the domain where ${n}(x)$ is a unit
normal vector at the boundary point $x$.}. 
In terms of the solution $\phi(x)$ of~\eqref{eqn:EL_phi_intro}, 
the control function at time $t$ is given by
\begin{equation}
u(x,t) = -\beta \nabla \phi(x). 
\label{eqn:gradient_gain}
\end{equation}
Note that the control function $u$ is vector-valued (with dimension
$d\times 1$) and it needs to be obtained for each value of time $t$.
The basic results on existence and uniqueness of $\nabla\phi$ are
summarized in Appendix~\ref{apdx:Poisson}.  These results require
additional assumptions on the prior $p_0^*$ and the function $h$.
These assumptions appear at the end of this section.

The inspiration for controlling a single particle -- via the control input $U_t^i$
in~\eqref{eqn:particle_dyn} -- comes from the mean-field type control
formalisms~\cite{huacaimal07,bens-master,brockett2007,YinMehMeyShan12}, control
methods for optimal transportation~\cite{chen2016Relation,chen2016Linear}, and the feedback
particle filter (FPF) algorithm for nonlinear
filtering~\cite{Tao_TAC,Tao_Automatica}.  One interpretation of the control input $u(X_t^i,t)$ is that it implements the ``Bayesian update
step'' to steer the ensemble $\{X_t^i:1\le i\le N\}$ towards
the global minimum $\bar{x}$.  Structurally, the control-based approach of this
paper is a significant departure from the importance sampling based
implementation of the Bayes rule in conventional particle filters.  It
is noted that there are no additional steps, e.g., associated
with resampling, reproduction, death, or birth of particles.  In the
language of importance sampling, the particle flow is designed so that
the particles automatically have identical importance weights for all
time.  The Poisson equation~\eqref{eqn:EL_phi_intro} also appears in
FPF~\cite{Tao_TAC} and in other related algorithms for nonlinear
filtering~\cite{reich11,Daum10}.

\medskip

The contributions of this paper are as follows:
\begin{romannum}
\item{\bf Variational formulation:} A time-stepping
procedure is introduced consisting of successive minimization
problems in the space of probability densities.  The construction
shows the density transport~\eqref{eqn:evolution} may be regarded as a gradient flow, or a
steepest descent, for the expected value of the function $h$, with
respect to the Kullback--Leibler divergence.  More significantly, the
construction is used to motivate a mean-field type optimal control
problem.  The control
law~\eqref{eqn:particle_dyn}-\eqref{eqn:gradient_gain} for the
proposed particle filter represents the solution to this problem.  The
Poisson equation~\eqref{eqn:EL_phi_intro} is derived from the
first-order analysis of the Bellman's optimality principle.  To the
best of our knowledge, our paper provides the first
derivation/interpretation of a (Bayesian) particle filter as a
solution to an optimal control problem.  For a discussion on the
importance of the variational aspects of nonlinear filter,
see~\cite{mitter03} and~\cite{laugesen15}.  

\medskip

\item{\bf Quadratic Gaussian case:} For a quadratic objective function $h$
  and a Gaussian prior $p_0^*$, the partial differential equation (pde)~\eqref{eqn:EL_phi_intro} admits a closed-form solution.
  The resulting control law is shown to be affine in the state.  The
  quadratic Gaussian problem is an example of the more general
  parametric case where the density is of a (known) parametrized form.  For
  the parametric case, the filter is shown to be equivalent to the
  finite-dimensional natural gradient algorithm for the parameters~\cite{kakade}.  

\medskip

\item{\bf Numerical algorithms:} For the general non-parametric case,
the pde~\eqref{eqn:EL_phi_intro} may not admit a closed form solution.  Based on our
prior work on the feedback particle filter,
two algorithms are discussed: (i) Galerkin
algorithm; and (ii) Kernel-based algorithm.  The
algorithms are completely adapted to data (That is, they do not require
an explicit approximation of $p(x,t)$ or computation of
derivatives of $h$).  Two numerical examples are presented to illustrate these
algorithms. 

\end{romannum}

\newP{Literature review} There are two broad categories of global optimization algorithms: (i) Instance-based algorithms and (ii)
Model-based algorithms; cf.,~\cite{Zlochin04}.  The instance-based algorithms
include simulated annealing~\cite{kirkpatrick1983SA, romeijn1994simulated}, genetic algorithms~\cite{GA}, 
nested partitioning methods~\cite{shi2000nested}, and various
types of random search~\cite{zabinsky2013stochastic} and particle
swarm~\cite{PSO,yang2010nature} algorithms.  The optimization is cast
as an iterative search where one seeks to balance the exploration of
the state-space with the optimization objective.  In~\cite{CE99}, such
algorithms are referred to as `local search heuristics,' presumably
because they depend upon the local topological structure of the
state-space. 

In recent years, the focus has been on model-based algorithms where a
probabilistic model -- sequence of recursively-defined distributions
(e.g.,~\eqref{eq:Bayes_with_Yn}) 
-- informs the search of the global optimizer.
Examples include (i) non-parametric approaches such as estimation of
distribution algorithm~\cite{larranaga2002estimation}, 
sequential Monte Carlo simulated annealing~\cite{zhou2013sequential}, 
and the particle filter optimization (PFO)~\cite{zhou2014particle}; and (ii) parametric approaches such as the
cross-entropy (CE)~\cite{CE99} and the model reference adaptive search
~\cite{hu2007model} algorithms.  Recent surveys of the model-based
algorithms appear in~\cite{Hu_survey12, Bin16}.  

The main steps for a non-parametric algorithm are as follows: (i) the
(prescribed) distribution at discrete time $t_n$ is used to generate a
large number of samples, (ii) a selection mechanism is used to generate
`elite samples' from the original samples, (iii) the distribution at
time $t_n+\Delta t_n$ is the distribution estimated from the elite
samples. The SISR particle filter~\eqref{eq:SIR} may be viewed as a
model-based algorithm where the selection mechanism is guided by the
importance weights and the new samples are generated via the
resampling step.  A more general version of~\eqref{eq:SIR} is the
model-based evolutionary optimization (MEO) algorithm~\cite{Wang10} where
the connection to the replicator pde is also provided.  The stochastic
extension of this algorithm is the PFO~\cite{zhou2014particle} based
on a nonlinear filtering model (see also~\cite{Bin16, liu2015posterior, stinis2012stochastic}).  
Related Bayesian approaches to
particle swarm optimization appears
in~\cite{ji2008particle, monson2004kalman}.  

The parametric version of the model-based algorithm is similar
except that at each discrete time-step, the infinite-dimensional
distribution is replaced by its finite-dimensional parametric
approximation.  For example, in the CE algorithm, the parameters are
chosen to minimize the Kullback-Leibler distance (cross-entropy)
between the distribution and its approximation.  In
particle filtering, this is referred to as density projection~\cite{zhou2010POMDP}.         

The two sets of theoretical results in this paper -- the non-parametric
results in \Sec{sec:non-param} and the parametric results in
\Sec{sec:parametric} -- represent the control counterparts of the non-parametric and the
parametric model-based algorithms.  The variational
analysis serves to provide the connection between these as well as
suggest systematic approaches for approximation of the optimal control
law~\eqref{eqn:gradient_gain}.  

Apart from the MEO and PFO algorithms, the non-parametric particle
filter model~\eqref{eqn:particle_dyn} of this paper has some
interesting parallels to the consensus-based optimization
algorithm~\cite{martin2016consensus} where an interacting particle system is
proposed to steer the distribution to the global optimizer.  The
parametric models in this paper are related to the stochastic
approximation type model-based algorithms~\cite{Hu_StoA12} and the natural gradient
algorithm~\cite{kakade}.

\medskip

The outline of the remainder of this paper is as follows: The
variational aspects of the filter -- including the 
non-parametric and parametric cases -- appears in~\Sec{sec:main}. The
details of the two algorithms for numerical approximation of the
control function appear
in~\Sec{sec:control_fn}.  The numerical examples appear
in~\Sec{sec:numerics}.  All the proofs are contained
in the Appendix.

\smallskip

\noindent {\bf Notation:} The Euclidean space $\Re^d$ is equipped
with the Borel $\sigma$-algebra denoted as ${\cal B}(\Re^d)$.  The
space of Borel probability measures on $\Re^d$ with finite second moment is denoted as ${\cal P}$:
\[
{\cal P} \doteq \left\{ \rho:\Re^d\rightarrow [0,\infty) \,
    \text{meas. density} \,\Big{\vert} ~ \int |x|^2 \rho(x) \ud x < \infty \right\}.
\]
The density for a Gaussian random
variable with mean $m$ and variance $\Sigma$ is denoted as
${\cal N}(m,\Sigma)$.  For vectors $x,y\in\Re^d$, the dot product is denoted
as $x\cdot y$ and $|x|:=\sqrt{x\cdot x}$; $x^T$ denotes the transpose
of the vector.  Similarly, for a matrix $\K$, $\K^T$ denotes the
matrix transpose, and $K\succ 0$ denotes positive-definiteness. For
$l,k\in\mathbb{Z}^+$ (Natural numbers), the
tensor notation $\delta_{lk}$ is used to denote the identity matrix
($\delta_{lk}=1$ if $l=k$ and $0$ otherwise).
$C^k$ is used to denote the space of $k$-times
continuously differentiable functions on $\Re^d$. For a function $f$, $\nabla f = \frac{\partial f}{\partial x_i}$ is
used to denote the gradient vector, and 
$D^2 f = \frac{\partial^2 f}{\partial x_i \partial x_j}$ 
is used to denote the Hessian matrix.  $L^\infty$ denotes the
space of bounded functions on $\Re^d$ with associated norm denoted as $\|\cdot\|_{\infty}$. 
$L^2(\Re^d;\pr)$ is the Hilbert space of square integrable functions on
$\Re^d$ equipped with the inner-product, $\big
<\phi,\psi\big>:=\int\phi(x)\psi(x) \pr(x)\ud x$. 
The associated norm is
denoted as $\|\phi\|^2_2:=\big<\phi,\phi\big>$. The space  
$H^1(\Re^d;\pr)$ is the
space of square integrable functions $\phi$ whose derivative (defined in the
weak sense) is in $L^2(\Re^d;\pr)$.  For a function $\phi\in
L^2(\Re^d;\pr)$, $\hat{\phi}:=\int \phi(x) \rho(x) \ud x$ denotes the mean.  $L^2_0$
and $H_0^1$ denote the co-dimension $1$ subspaces of functions whose mean is zero.  

\medskip

\noindent {\bf Assumptions:} 
The following assumptions are made throughout the paper:
\begin{romannum}
\item {\bf Assumption A1:} The prior probability density function
  $p_0^*\in {\cal P}$ and is
of the form $p_0^*(x)= e^{-V_0(x)}$ where $V_0 \in C^2$, $D^2 V_0 \in L^\infty$, and 
\begin{equation*}
\liminf_{|x| \to \infty}\;\;\nabla V_0(x) \cdot \frac{x}{|x|} = \infty.
\end{equation*}
\item {\bf Assumption A2:} The function $h \in C^2 \cap  L^2(\Re^d;p^*_0)$  with $D^2 h \in L^\infty$ and \begin{equation*}
\liminf_{|x| \to \infty}\;\;\nabla h(x) \cdot \frac{x}{|x|} > -\infty.
\end{equation*}
\item {\bf Assumption A3:} 
The function $h$ has a unique minimizer $\bar{x} \in \Re^d$ with
minimum value $h(\bar{x}) =: \bar{h}$. Outside some compact set
$D\subset \Re^d$,  $\exists \; r > 0$ such that
\begin{equation*}
h(x) > \bar{h}+r \quad\quad \forall \; x \in \Re^d \setminus D.
\end{equation*}
\end{romannum}

\medskip

\begin{remark} Assumptions (A1) and (A2) are
important to prove existence, uniqueness and regularity of the
solutions of the Poisson equation (see Appendix~\ref{apdx:Poisson}).
(A1) holds for density with Gaussian tails.    
Assumption (A3) is used to obtain weak convergence of $p^*(x,t)$ to Dirac
delta at $\bar{x}$.  The uniqueness of the minimizer $\bar{x}$ can be
relaxed to obtain weaker conclusions on convergence (See
Appendix~\ref{apdx:exact}).

\end{remark}

\section{Variational Formulation}
\label{sec:main}

\subsection{Non-parametric case}\label{sec:non-param}
A variational formulation of the Bayes
recursion~\eqref{eq:Bayes_with_Yn} is the following {\em time-stepping
  procedure}: For the discrete-time sequence
$\{t_0,t_1, t_2,\hdots,t_{\bN}\}$ with increments $\Delta t_n \doteq
t_n - t_{n-1}$ (see \Sec{sec:intro}),  
set $\rho_0 = p_0^* \in {\cal P}$ and recursively define
$\{\rho_n\}_{n=1}^{\bN} \subset {\cal P}$ by taking $\rho_n \in {\cal
  P}$ to minimize the functional 
\begin{equation}
{\sf I}(\rho|\rho_{n-1})\doteq \frac{1}{\Delta t_n} {\sf D}(\rho \mid \rho_{n-1}) + \beta \int
h(x) \pr(x)\ud x,
\label{eqn:obj_fn}
\end{equation}
where ${\sf D}$ denotes the relative entropy or Kullback--Leibler divergence,
\[
{\sf D}(\rho \mid \rho_{n-1}) :=
\int \rho(x)
\ln \Bigl(\frac{\rho(x)}{\rho_{n-1} (x)} \Bigr) \ud x.
\]

The proof that $\rho_n$, as defined in~\eqref{eq:Bayes_with_Yn}, is in
fact the minimizer is straightforward: By Jensen's formula, ${\sf I}(\rho|\rho_{n-1})
\geq- \ln(\int\rho_{n-1}(y)\exp(- h(y) \Delta t_n)\ud y)$ with equality if
and only if $\rho=\rho_n$.  Although the optimizer is
known,  a careful look at the first order optimality equations
associated with $\rho_n$ leads to i) the replicator dynamics pde for the gradient flow
(in~Theorem~\ref{thm:gradient_flow}), and ii) the proposed particle
filter algorithm for approximation of the posterior
(in~Theorems~\ref{thm:opt-cont} and~\ref{thm:opt-cont-infinite}).  

The sequence of minimizers $\{\rho_n\}_{n=0}^{\bN}$ is used to construct, via
interpolation, a density
function $\rho^{(\bN)}(x,t)$ for $t\in[0,T]$: 
Define $\rho^{(\bN)}(x,t)$ by setting 
\begin{equation*}
\rho^{(\bN)}(x,t) := \rho_n(x),~~\text{for} ~ t \in [t_n, t_{n+1})
\end{equation*}
for $n=0,1,2,\hdots,\bN-1$.  The proof of the following theorem appears in
Appendix~\ref{apdx:evolution}.

\begin{theorem}[Gradient flow]\label{thm:gradient_flow} In the limit as $\bN\bf \to \infty$ the
  density $\pr^{(\bN)}(x,t)$ converges pointwise to the density $\pr(x,t)$
  which  is a weak solution of of the following replicator
    dynamics pde: 
\begin{equation}
 \frac{\partial \rho}{\partial t}(x,t) = -\beta(h(x) - \hat{h}_t)\,\rho(x,t), \quad \pr(x,0)=p_0^*(x).
 \label{eq:replicator}
\end{equation}
\label{thm:evolution}
\end{theorem}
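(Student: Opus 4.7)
The plan is to exploit the fact that the minimizer of the functional in~\eqref{eqn:obj_fn} is already given explicitly by the Bayes update~\eqref{eq:Bayes_with_Yn}, which sidesteps the need for a full variational/JKO-style analysis of the time-stepping scheme. Iterating the Bayes recursion and noting that the exponential likelihoods combine additively in the exponent while the normalization constants telescope, one obtains the closed-form identity
\[
\rho_n(x) \;=\; \frac{p_0^*(x)\exp(-\beta h(x)\, t_n)}{\int p_0^*(y)\exp(-\beta h(y)\, t_n)\,\ud y} \;=\; p^*(x,t_n),
\]
so the piecewise-constant interpolant satisfies $\rho^{(\bN)}(x,t) = p^*(x,t_n)$ on $[t_n,t_{n+1})$.

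First I would establish pointwise convergence. Let $Z(t):=\int p_0^*(y)\exp(-\beta h(y)\,t)\,\ud y$. Assumptions (A1)--(A2) guarantee that $Z(t)$ is finite, continuous, and bounded away from zero on $[0,T]$: the super-linear growth of $V_0$ imposed by (A1) gives $p_0^*$ Gaussian-type tails, while the condition $\liminf_{|x|\to\infty}\nabla h(x)\cdot x/|x|>-\infty$ from (A2) ensures that $-\beta h(y)t$ grows at most linearly in $|y|$, so the tails of $p_0^*$ dominate. Hence $t\mapsto p^*(x,t)$ is continuous for each $x\in\Re^d$, and since the mesh width $\max_n \Delta t_n\to 0$ as $\bN\to\infty$, the convergence $\rho^{(\bN)}(x,t)\to p^*(x,t)$ is immediate.

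Next I would verify that $\rho(x,t):=p^*(x,t)$ is a weak solution of the replicator pde. Direct differentiation of the explicit formula yields, wherever $p_0^*(x)>0$,
\[
\partial_t \ln \rho(x,t) \;=\; -\beta h(x) - \frac{\dot Z(t)}{Z(t)} \;=\; -\beta\bigl(h(x) - \hat h_t\bigr),
\]
where $\dot Z(t)/Z(t) = -\beta\,\hat h_t$ with $\hat h_t=\int h(y)\rho(y,t)\,\ud y$ is obtained by differentiation under the integral sign. Multiplying by $\rho$ gives the pde pointwise, and the weak formulation then follows by pairing against an arbitrary test function $\varphi\in C_c^\infty(\Re^d\times[0,T))$ and integrating by parts in $t$; the initial condition $\rho(\cdot,0)=p_0^*$ is built into the explicit formula.

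The main technical obstacle is justifying the differentiation of $Z(t)$ and $\hat h_t$ under the integral sign, which requires uniform-in-$t$ integrability of $h(x)\,p_0^*(x)\exp(-\beta h(x)t)$ (and ideally $h(x)^2$ times the same, to verify finite second moment and continuity of $\hat h_t$). This is precisely the role of the integrability piece of (A2)---$h\in L^2(\Re^d;p_0^*)$---combined with the growth bounds of (A1) and (A2) already used to control $Z(t)$. Once these dominated-convergence arguments are in place, the pointwise identity above is valid for a.e.\ $(x,t)$, the weak formulation follows, and the proof is complete.
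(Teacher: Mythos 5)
Your proposal is correct and follows essentially the same route as the paper's own proof in Appendix~\ref{apdx:evolution}, which likewise notes that the interpolant converges to the explicitly known posterior $p^*(x,t)$ of~\eqref{eqn:evolution} and then verifies the replicator pde by direct substitution; your write-up merely supplies the dominated-convergence details justifying the differentiation of $Z(t)$ and $\hat h_t$, which the paper leaves implicit (and which are even easier using Assumption (A3), since $h\geq\bar h$ bounds the exponential factor uniformly). The paper additionally sketches a variational derivation via the Euler--Lagrange equation from the conference version, but that is an alternative, not the argument you needed to reproduce.
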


To construct the particle filter, the key idea is to view the gradient flow
time-stepping procedure as a dynamic programming recursion from time
$t_{n-1}\rightarrow t_n$:
\[
\rho_{n} = \mathop{\text{argmin}}_{\rho^{(u)}\in{\cal P}}\;
\underbrace{\frac{1}{\Delta t_n} {\sf D}(\rho^{(u)} |
  \rho_{n-1})}_{\text{control cost}} \; +\;  V(\rho^{(u)}),
\]
where $V(\rho^{(u)}):= \beta \int \rho^{(u)}(x) h (x) \ud x$ is the cost-to-go. 
The notation $\rho^{(u)}$ for density corresponds to the following
construction:  Consider the differential equation
\[
\frac{\ud X_t^i}{\ud t} = u(X_t^i,t)
\]
and denote the associated flow from $t_{n-1}\rightarrow t_n$ as
$x\mapsto s_n(x)$.  Under suitable assumptions on $u$ (Lipschitz in
$x$ and continuous in $t$), the
flow map $s_n$ is a well-defined diffeomorphism on $\Re^d$ and $\rho^{(u)} := s_n^{\#} \left(\rho_{n-1} \right)$, where
$s_n^{\#}$ denotes the push-forward operator.  The push-forward of a
probability density $\rho$ by a smooth map $s$ is defined through the
change-of-variables formula 
\begin{equation*}
	\int f(x) [s^{\#}(\rho)](x) \ud x = \int f(s(x)) \rho(x) \ud x
\end{equation*}
for all continuous and bounded test functions $f$.

Via a formal but straightforward calculation, in the asymptotic limit
as $\Delta t_n \rightarrow 0$, the control cost is
expressed in terms of the control $u$ as
\begin{align}
 \frac{1}{\Delta t_n} {\sf D}(\rho^{(u)} | \rho_{n-1}) &=
  \frac{\Delta t_n}{2} \int \left|\frac{1}{\pr_{n-1}}\nabla \cdot(\pr_{n-1} u) \right|^2
    \,\pr_{n-1}\,\ud x 
    + o(\Delta t_n).
\label{eq:limitinganal}
\end{align}  

\medskip

These considerations help motivate the following optimal control problem:
\begin{equation}
\begin{aligned}
\mathop{\text{Minimize:}}_{u}&\quad J(u) = \int_0^{T} {\sf L}(\pr_t,u_t)\ud t +\beta \int h(x)\pr_T(x)\ud x \\
\text{Constraint:}&\quad\frac{\partial \pr_t}{\partial t} + \nabla
\cdot\left(\pr_t u_t\right) = 0,\quad \pr_0(x)=p_0^*(x), 
\end{aligned}
\label{eq:opt-cont-const}
\end{equation}
where the Lagrangian is defined as
\begin{align*}
{\sf L}(\pr,u) :=  \frac{1}{2} \int_{\Re^d} {\left|\frac{1}{\pr(x)}\nabla \cdot(\pr(x)
  u(x))\right|^2} \;\pr(x)\,\ud x \nonumber \, + \, \frac{\beta^2}{2} \int_{\Re^d} |h(x)-\hat{h}|^2 \;\pr(x)\,\ud x, 
\end{align*} 
where $\hat{h}:=\int h(x) \rho(x)\ud x$.  

The Hamiltonian is defined as
\begin{equation}\label{eq:H_Defn}
{\sf H}(\pr,{\sf q},u) := {\sf L}(\pr,u)- \int {\sf q}(x) \nabla \cdot(\pr(x) u(x)) \,\ud x
\end{equation} 
where ${\sf q}$ is referred to as the momentum.  

\medskip

Suppose $\rho\in{\cal P}$ is the density at time $t$.  The value
function is defined as
\begin{equation}
V(\rho,t) := \inf_{u} \left[\int_t^T {\sf L}(\pr_s,u_s)\ud s\right].
\label{eq:val_fn_def}
\end{equation}
The value function is a functional on the space of
densities.  For a fixed $\rho\in{\cal P}$ and time $t\in[0,T)$, the
(G\^ateaux) derivative of $V$ is a function on $\Re^d$, and an element
of the function space $L^2(\Re^d;\rho)$.  This function is denoted as $\frac{\partial V}{\partial
  \pr}(\pr,t)(x)$ for $x\in\Re^d$.  
Additional details appear in the Appendix~\ref{apdx:opt-cont} where the
following Theorem is proved.

\medskip

\begin{theorem}[Finite-horizon optimal control] 
Consider the optimal control problem~\eqref{eq:opt-cont-const} with
the value function defined in~\eqref{eq:val_fn_def}.  Then $V$ solves
the following DP equation: 
\begin{equation*}
\begin{aligned}
\frac{\partial V}{\partial t}(\pr,t) &+ \inf_{u\in L^2}~ {\sf H}(\pr,\frac{\partial V}{\partial \pr}(\pr,t),u) = 0,\quad t \in [0,T),\\
V(\pr,T) &= \beta \int h(x)\pr(x)\ud x.
\end{aligned}
\end{equation*}
The solution of the DP equation is given by \[V(\rho,t) =\beta \int_{\Re^d} 
h(x) \pr(x) \ud x,\] and the associated optimal control is a solution of the
following pde:
\begin{align}
\frac{1}{\pr(x)}\nabla \cdot(\pr(x) u(x)) 
& = \beta(h(x) - \hat{h}),\quad\forall\;x\in\Re^d. \label{eq:opt-cont-law}
\end{align}
\label{thm:opt-cont}
\end{theorem}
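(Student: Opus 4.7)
The plan is to proceed by an ansatz-and-verification strategy: guess the explicit form $V(\pr,t)=\beta\int h(x)\pr(x)\,\ud x$, compute its derivatives in $t$ and $\pr$, and check that it satisfies the DP equation together with the stated terminal condition. Since the ansatz is independent of $t$, one has $\partial V/\partial t\equiv 0$. The G\^ateaux derivative in $\pr$ is read off from $V(\pr+\varepsilon\eta,t)-V(\pr,t)=\varepsilon\beta\int h\,\eta\,\ud x$ for admissible mean-zero perturbations $\eta$, giving $\frac{\partial V}{\partial\pr}(\pr,t)(x)=\beta h(x)$ as an element of $L^2_0(\Re^d;\pr)$. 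The terminal condition $V(\pr,T)=\beta\int h\pr\,\ud x$ is immediate from the ansatz.

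The next step is to compute $\inf_u{\sf H}(\pr,\beta h,u)$ and show it equals zero. Introduce the scalar field $m(x):=\frac{1}{\pr(x)}\nabla\cdot(\pr(x) u(x))$, so that ${\sf L}(\pr,u)=\tfrac{1}{2}\int m^2\pr\,\ud x+\tfrac{\beta^2}{2}\int(h-\hat h)^2\pr\,\ud x$. The divergence-form identity $\int m\,\pr\,\ud x=\int\nabla\cdot(\pr u)\,\ud x=0$ (valid under the tail decay implied by Assumption~(A1)) allows the coupling term in ${\sf H}$ to be rewritten as $\int\beta h\,\nabla\cdot(\pr u)\,\ud x=\int\beta(h-\hat h)\,m\,\pr\,\ud x$. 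Substituting into ${\sf H}={\sf L}-\int\beta h\,\nabla\cdot(\pr u)\,\ud x$ collapses it into a pointwise complete square,
\begin{equation*}
{\sf H}(\pr,\beta h,u)\;=\;\tfrac{1}{2}\int_{\Re^d}\bigl|\,m(x)-\beta(h(x)-\hat h)\,\bigr|^2\pr(x)\,\ud x\;\ge\;0,
\end{equation*}
with equality precisely when $m=\beta(h-\hat h)$, i.e., the pde~\eqref{eq:opt-cont-law}. Existence of a minimizing $u$ reduces to solvability of this weighted Poisson equation and is supplied by the theory referenced in Appendix~\ref{apdx:Poisson}. Together with $\partial V/\partial t=0$ this yields the DP equation $\partial_t V+\inf_u{\sf H}=0$.

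The identification of the ansatz with the value function is completed by a short verification argument. Along the continuity equation driven by any admissible $u$, the elementary inequality $\tfrac{1}{2} m^2+\tfrac{\beta^2}{2}(h-\hat h)^2\ge\beta(h-\hat h)\,m$, integrated against $\pr$ and combined with the identity above, gives ${\sf L}(\pr_s,u_s)\ge -\tfrac{\ud}{\ud s}\bigl[\beta\int h\,\pr_s\,\ud x\bigr]$. Integrating over $[t,T]$ produces the lower bound $\int_t^T{\sf L}(\pr_s,u_s)\,\ud s+\beta\int h\,\pr_T\,\ud x\ge\beta\int h\,\pr_t\,\ud x$. The matching upper bound is attained by the optimal $u$ identified above: for this choice ${\sf L}(\pr_s,u_s)=\beta^2\int(h-\hat h_s)^2\pr_s\,\ud x$, the continuity equation reduces to the replicator dynamics $\partial_s\pr_s=-\beta(h-\hat h_s)\pr_s$, and a direct computation of $\tfrac{\ud}{\ud s}[\beta\int h\pr_s\,\ud x]$ makes the inequality tight.

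The main technical obstacle I expect is making the calculus on the space of densities rigorous: specifying the class of admissible perturbations and controls so that $\frac{\partial V}{\partial\pr}$ genuinely lives in $L^2_0(\Re^d;\pr)$, the boundary term $\int\nabla\cdot(\pr u)\,\ud x$ really vanishes, and the Hamiltonian minimizer exists in this class. Assumptions (A1)--(A2) on the tails of $p_0^*$ and $h$ are what buy the decay needed for these integrations by parts and for applicability of the Poisson-equation theory invoked at the minimization step; the rest of the argument is algebraic once the functional-analytic setup of Appendix~\ref{apdx:Poisson} is in place.
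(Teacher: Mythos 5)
Your proposal is correct, and its second half---the verification that $\beta\int h\,\pr_t\,\ud x$ lower-bounds the cost of every admissible control, with equality for a control solving \eqref{eq:opt-cont-law}---is precisely the paper's own sufficiency argument: the same Young-type inequality $\beta(h-\hat h)\,m\le\tfrac{1}{2}m^2+\tfrac{\beta^2}{2}(h-\hat h)^2$ integrated against $\pr$ and then in time. Where you genuinely differ is in the first half. The paper derives the minimizer by a first-variation analysis of ${\sf H}$ in $u$, substitutes it into the DP equation to obtain an HJB equation in $V$ and $\Theta=\frac{\partial V}{\partial\pr}$, passes to the master equation for $\Theta$ alone, and only then checks that $\Theta\equiv\beta h$ solves it; you instead posit the ansatz $V=\beta\int h\,\pr\,\ud x$ outright and prove $\inf_u{\sf H}(\pr,\beta h,u)=0$ by completing the square in $m=\frac{1}{\pr}\nabla\cdot(\pr u)$, which simultaneously delivers \eqref{eq:opt-cont-law} as the equality condition. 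Your route is shorter and avoids the master-equation machinery entirely, at the price of presupposing the answer rather than deriving it; the paper's detour is what motivates the guess $\Theta=\beta h$ and ties the result to the mean-field-control formalism it cites. One point in your favor: the definition \eqref{eq:val_fn_def} as written omits the terminal cost $\beta\int h\,\pr_T\,\ud x$ (which would force $V(\pr,T)=0$); your reading, which includes the terminal cost in the cost-to-go, is the one consistent with the stated terminal condition and with the paper's own sufficiency computation, and your caveats about the vanishing boundary term and existence of the minimizer via Appendix~\ref{apdx:Poisson} are exactly the right technical points to flag.
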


It is also useful to consider the following infinite-horizon version of the optimal control problem:
\begin{equation}
\begin{aligned}
\mathop{\text{Minimize:}}_{u}&\quad J(u) = \int_0^{\infty} {\sf L}(\pr_t,u_t)\ud t  \\
\text{Constraints:}& \quad\begin{cases}\frac{\partial \pr_t}{\partial t} + \nabla
\cdot\left(\pr_t u_t\right) = 0,\quad \pr_0(x)=p_0^*(x), \\
\underset{t \to \infty}{\lim} \int h(x)\pr_t(x) = h(\bar{x}).
\end{cases}
\end{aligned}
\label{eq:opt-cont-const-infinite}
\end{equation}
For this problem, the value function is defined as
\begin{equation}
V(\pr) = \inf_u ~ J(u).
\label{eq:val-func-infinite}
\end{equation}
The solution is given by the following Theorem whose proof appears in
Appendix~\ref{apdx:opt-cont}:

\medskip

\begin{theorem}[Infinite-horizon optimal control]
Consider the infinite horizon optimal control problem
\eqref{eq:opt-cont-const-infinite} with the value function defined
in~\eqref{eq:val-func-infinite}. The value function is given by
\[V(\pr)= \beta\int_{\Re^d}  h(x)\pr(x) \ud x - \beta h(\bar{x})\] 
and the associated optimal control law is a solution of the
pde~\eqref{eq:opt-cont-law}. 
\label{thm:opt-cont-infinite}
\end{theorem}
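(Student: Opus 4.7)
My plan is to prove Theorem~\ref{thm:opt-cont-infinite} by establishing the claimed value in two steps: (i) derive a \emph{universal lower bound} $J(u)\ge \beta\int h(x)\pr(x)\ud x-\beta h(\bar x)$ valid along \emph{every} admissible trajectory using a completing-the-square identity on the Lagrangian, and (ii) show that the feedback law characterized by \eqref{eq:opt-cont-law} (the same Poisson-type equation as in the finite-horizon Theorem~\ref{thm:opt-cont}) achieves this bound with equality. Because the same optimality pde surfaces in both problems, the key point is to verify that when the controlled density is driven by~\eqref{eq:opt-cont-law}, the infinite-horizon constraint $\lim_{t\to\infty}\int h\,\pr_t\ud x=h(\bar x)$ is in fact satisfied; this is exactly the content of Theorem~\ref{thm:gradient_flow} combined with the convergence result catalogued in Appendix~\ref{apdx:exact}.

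For the lower bound, I would apply Young's inequality $\tfrac12 a^2+\tfrac12 b^2\ge -ab$ pointwise with $a=\frac{1}{\pr(x)}\nabla\cdot(\pr(x)u(x))$ and $b=\beta(h(x)-\hat h)$. Integrating against $\pr$ gives
\begin{equation*}
{\sf L}(\pr,u)\;\ge\;-\beta\int(h(x)-\hat h)\,\nabla\cdot(\pr(x)u(x))\,\ud x\;=\;\beta\int\nabla h(x)\cdot u(x)\,\pr(x)\ud x,
\end{equation*}
after integration by parts (with the $\hat h$ term vanishing because $\int\nabla\cdot(\pr u)\ud x=0$). Combined with the continuity equation, the right-hand side equals $-\beta\,\tfrac{\ud}{\ud t}\int h\,\pr_t\ud x$, so integrating in time and invoking the constraint $\int h\,\pr_t\ud x\to h(\bar x)$ gives
\begin{equation*}
J(u)\;\ge\;-\beta\!\!\int_0^\infty\!\!\frac{\ud}{\ud t}\!\!\int h\,\pr_t\,\ud x\,\ud t\;=\;\beta\!\!\int h(x)\pr_0(x)\ud x-\beta h(\bar x),
\end{equation*}
which establishes $V(\pr)\ge\beta\int h\pr\ud x-\beta h(\bar x)$ for arbitrary admissible $u$.

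For attainment, I take the candidate $u^*$ solving \eqref{eq:opt-cont-law}. Young's inequality is then saturated because $a=-b$, and the induced density satisfies $\partial_t\pr_t=-\nabla\cdot(\pr_t u^*_t)=-\beta(h-\hat h_t)\pr_t$, i.e.\ the replicator pde of Theorem~\ref{thm:gradient_flow}. Substituting the optimal relation into ${\sf L}(\pr,u^*)$ gives ${\sf L}(\pr_t,u^*_t)=\beta^2\,\mathrm{Var}_{\pr_t}(h)$, and a direct computation shows
\begin{equation*}
\int_0^T{\sf L}(\pr_t,u^*_t)\,\ud t\;=\;\beta\int h\,\pr_0\ud x-\beta\int h\,\pr_T\ud x.
\end{equation*}
Theorem~\ref{thm:gradient_flow} together with the weak convergence $\pr_T\Rightarrow\delta_{\bar x}$ from Appendix~\ref{apdx:exact} (which in turn uses Assumption A3) delivers $\int h\,\pr_T\ud x\to h(\bar x)$, so the constraint is met and $J(u^*)=\beta\int h\pr_0\ud x-\beta h(\bar x)$, matching the lower bound.

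The main obstacle I anticipate is justifying the integration by parts and the interchange of limit and integral rigorously for the broad class of admissible controls -- particularly ensuring sufficient decay of $\pr_t u_t$ at infinity and uniform integrability of $h\pr_t$ along the trajectory. The assumptions A1--A2 on tail behavior of $p_0^*$ and $h$ are precisely what is needed to make these steps legitimate, and the argument parallels the analogous estimates that presumably appear in Appendix~\ref{apdx:opt-cont} for the finite-horizon case; the infinite-horizon step then reduces to a Fatou/monotone argument letting $T\to\infty$ in the identity above.
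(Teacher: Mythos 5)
Your overall strategy is the same as the paper's: the proof in Appendix~\ref{apdx:opt-cont} consists of a formal DP/first-order verification plus exactly the completing-the-square ``sufficiency'' argument you propose --- bound $-\beta\frac{\ud}{\ud t}\int h\pr_t\ud x$ above by ${\sf L}(\pr_t,v_t)$ for an arbitrary admissible control, integrate in time, invoke the terminal constraint $\lim_{t\to\infty}\int h\pr_t\ud x=h(\bar x)$, and observe equality for the control solving~\eqref{eq:opt-cont-law}, whose admissibility follows from the convergence statement of Theorem~\ref{thm:exact}.

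However, as written your chain of estimates contains a sign inconsistency that invalidates the key step. With $a=\frac{1}{\pr}\nabla\cdot(\pr u)$ and $b=\beta(h-\hat h)$, the continuity equation gives $\frac{\ud}{\ud t}\int h\pr_t\ud x=\int\nabla h\cdot u_t\,\pr_t\ud x$, so $\beta\int\nabla h\cdot u\,\pr\ud x=+\beta\frac{\ud}{\ud t}\int h\pr_t\ud x$, not $-\beta\frac{\ud}{\ud t}\int h\pr_t\ud x$ as you claim. Hence your choice $\tfrac12a^2+\tfrac12b^2\ge -ab$ actually yields ${\sf L}(\pr_t,u_t)\ge+\beta\frac{\ud}{\ud t}\int h\pr_t\ud x$, which integrates to the useless bound $J(u)\ge\beta h(\bar x)-\beta\int h\pr_0\ud x$; moreover its equality case $a=-b$ contradicts~\eqref{eq:opt-cont-law}, which reads $a=+b$ and is what your own attainment computation uses (it is what produces the replicator dynamics). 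The two sign slips happen to cancel in your final display, but the intermediate identities are false as stated. The repair is simply to use the other half of Young's inequality, $\tfrac12a^2+\tfrac12b^2\ge ab$, giving ${\sf L}(\pr_t,u_t)\ge\beta\int(h-\hat h)\,\tfrac{1}{\pr_t}\nabla\cdot(\pr_t u_t)\,\pr_t\ud x=-\beta\frac{\ud}{\ud t}\int h\pr_t\ud x$, with equality precisely when $\tfrac{1}{\pr}\nabla\cdot(\pr u)=\beta(h-\hat h)$; this is the computation in the paper's sufficiency step, and with it your lower-bound-plus-attainment argument (including the use of $\hat h_T\to h(\bar x)$ from Theorem~\ref{thm:exact} and Appendix~\ref{apdx:exact} to verify the constraint and let $T\to\infty$) goes through. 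Your verification-only route also omits the paper's check that $V(\pr)=\beta\int h\pr\ud x-\beta h(\bar x)$ solves the stationary DP equation, which is harmless but worth noting.
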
 

\medskip

The particle filter
algorithm~\eqref{eqn:particle_dyn}-\eqref{eqn:gradient_gain} in
Sec.~\ref{sec:intro} is obtained by {\em additionally} requiring the
solution $u$ of~\eqref{eq:opt-cont-law} to be of the gradient form.
One of the advantages of doing so is that the optimizing control law,
obtained instead as solution of~\eqref{eqn:EL_phi_intro}, is uniquely defined
(See Theorem~\ref{thm:Poisson} in Appendix \ref{apdx:Poisson}).  In
part, this choice is guided by the $L^2$ optimality of the gradient
form solution (The proof appears in the Appendix \ref{apdx:opt-cont}):

\medskip

\begin{lemma}[$L^2$ optimality]
Consider the pde~\eqref{eq:opt-cont-law} where $\pr$ and $h$ satisfy
Assumptions (A1)-(A2). 
The general solution is given by
\[
u = - \beta \nabla \phi + v,
\]
where $\phi$ is the solution of~\eqref{eqn:EL_phi_intro}, $v$ solves $\nabla \cdot(\pr v)=0$, and 
\[
\|u\|_2^2  = \beta^2\|\nabla \phi\|^2_2 + \|v\|_2^2.
\]
That is, $u=-\beta\nabla\phi$ is the minimum $L^2$-norm solution
of~\eqref{eq:opt-cont-law}.  
\label{lem:L2norm}
\end{lemma}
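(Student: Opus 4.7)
The plan is to decompose any solution of~\eqref{eq:opt-cont-law} into the sum of a particular solution $u_0 := -\beta \nabla \phi$ and an element $v$ of the kernel of the first-order operator $w \mapsto \nabla \cdot(\rho w)$, and then use integration by parts to show that these two pieces are orthogonal in $L^2(\Re^d;\rho)$. The Pythagorean identity then gives both assertions of the lemma immediately.

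First I would verify that $u_0 = -\beta \nabla \phi$ is itself a solution. This is immediate from~\eqref{eqn:EL_phi_intro}:
\[
\frac{1}{\rho}\, \nabla \cdot (\rho u_0) \;=\; -\frac{\beta}{\rho}\, \nabla \cdot(\rho \nabla \phi) \;=\; \beta (h - \hat h).
\]
For any other solution $u$, linearity of the divergence operator gives $\nabla \cdot(\rho(u - u_0)) = 0$, so setting $v := u - u_0$ yields the decomposition $u = -\beta \nabla \phi + v$ with $\nabla \cdot(\rho v) = 0$.

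The crux is then the orthogonality relation $\langle \nabla \phi, v\rangle = 0$ in $L^2(\Re^d;\rho)$. Formally, integration by parts against the weighted measure $\rho\,\ud x$ gives
\[
\int \nabla \phi(x) \cdot v(x)\, \rho(x)\,\ud x \;=\; -\int \phi(x)\, \nabla \cdot(\rho(x) v(x))\, \ud x \;=\; 0,
\]
the second equality being the defining property of $v$. Expanding $\|u\|_2^2$ via bilinearity,
\[
\|u\|_2^2 \;=\; \beta^2 \|\nabla \phi\|_2^2 \,-\, 2\beta \langle \nabla \phi, v\rangle \,+\, \|v\|_2^2 \;=\; \beta^2 \|\nabla \phi\|_2^2 + \|v\|_2^2,
\]
and since $\|v\|_2^2 \ge 0$, the minimum is attained precisely at $v = 0$, i.e.\ $u = -\beta \nabla \phi$.

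The main obstacle is justifying the integration by parts with no boundary contribution on the unbounded domain $\Re^d$. I would treat this by a cutoff/exhaustion procedure: work first on a ball $B_R$, and show that the surface flux $\int_{\partial B_R} \phi\, (v\cdot n)\, \rho\, dS$ vanishes along a suitable subsequence $R_k \to \infty$. Under Assumptions (A1)--(A2), Theorem~\ref{thm:Poisson} of Appendix~\ref{apdx:Poisson} places $\nabla \phi \in L^2(\Re^d;\rho)$ and provides growth control on $\phi$; combined with the Gaussian-type tails of $\rho$ inherited from (A1) and the hypothesis $v,u \in L^2(\Re^d;\rho)$, a Cauchy--Schwarz bound on the flux together with the standard trace/averaging argument (choose $R_k$ so that $\int_{\partial B_{R_k}} |\phi|\, |v|\, \rho\, dS \to 0$) eliminates the boundary term and closes the proof.
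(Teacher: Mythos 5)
Your proposal is correct and follows essentially the same route as the paper's proof: write the general solution as the particular gradient solution $-\beta\nabla\phi$ plus a null solution $v$ with $\nabla\cdot(\rho v)=0$, establish orthogonality via $\int \nabla\phi\cdot v\,\rho\,\ud x = -\int \phi\,\nabla\cdot(\rho v)\,\ud x = 0$, and conclude by the Pythagorean identity. The only difference is that you additionally sketch a cutoff argument to justify the vanishing boundary term, a step the paper carries out formally without comment.
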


\begin{remark}\label{rem:pontmp}
In Appendix~\ref{apdx:Ham}, the Pontryagin's minimum principle of
optimal control is used to express the particle
filter~\eqref{eqn:particle_dyn}-\eqref{eqn:gradient_gain} in its Hamilton's form:
\begin{empheq}[box=\widefbox]{align*}
\frac{\ud X^i_t}{\ud t} &= u(X^i_t,t),\quad X^i_0 \sim p^*_0\\
0 &\equiv  {\sf H}(p(\cdot,t),{\scriptstyle \beta} h, u(\cdot,t)) = \min_{v \in L^2}~ {\sf H}(p(\cdot,t),{\scriptstyle \beta} h, v)
\end{empheq}
The Poisson equation~\eqref{eqn:EL_phi_intro} is simply the first
order optimality condition to obtain a minimizing control.  Under
this optimal control, the density $p(x,t)$ is the optimal trajectory.
The associated optimal trajectory for the momentum (co-state) is a
constant equal to its terminal value $\beta h(x)$.   
\end{remark}

\medskip

The following theorem shows that the particle
filter implements
the Bayes' transport of the density, and establishes the asymptotic
convergence for the density (The proof appears in the Appendix~\eqref{apdx:exact}).  We recall the notation for the two types
of density in our analysis:
\begin{enumerate}
 \item $p(x,t)$:  Defines the density of $X^i_t$.
 \item $p^*(x,t)$: The Bayes' density given by \eqref{eqn:evolution}.
\end{enumerate}

\medskip

\begin{theorem}[Bayes' exactness and convergence]  Consider the particle
filter~\eqref{eqn:particle_dyn}-\eqref{eqn:gradient_gain}.  If
$p(\varble,0)=p^*(\varble ,0)$, we have for all $t\geq 0$,
$$
p(\varble,t) = p^*(\varble,t).
$$
As $t\rightarrow\infty$, 
$\int h(x) p(x,t)\ud x$ decreases monotonically to $h(\bar{x})$ and 
$X_t^i\rightarrow \bar{x}$ in probability.
\label{thm:exact}
\end{theorem}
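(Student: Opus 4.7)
The plan is to split the theorem into the two claims and prove them in turn, with the second claim reducing to the weak convergence of $p^*(\cdot,t)$ already established in Appendix~\ref{apdx:exact}.

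For Bayes' exactness, I would start from the particle dynamics~\eqref{eqn:particle_dyn}--\eqref{eqn:gradient_gain}. Since each $X_t^i$ evolves according to the deterministic ODE $\dot{X}_t^i = -\beta\nabla\phi(X_t^i,t)$, and since Assumptions (A1)--(A2) together with the regularity results of Appendix~\ref{apdx:Poisson} guarantee that $\nabla\phi$ is sufficiently regular in $x$ and $t$, the density $p(x,t)$ of $X_t^i$ satisfies the Liouville (continuity) equation
\begin{equation*}
\frac{\partial p}{\partial t}(x,t) + \nabla\cdot\bigl(p(x,t)\, u(x,t)\bigr) = 0,\qquad p(\varble,0) = p_0^*.
\end{equation*}
Substituting $u = -\beta\nabla\phi$ and then using the Poisson equation~\eqref{eqn:EL_phi_intro} with $\rho = p(\varble,t)$ in place of $\rho$, the divergence term becomes $-\beta(h(x) - \hat{h}_t) p(x,t)$, where $\hat{h}_t := \int h(x) p(x,t)\,\ud x$. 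Hence $p$ is a weak solution of the replicator pde~\eqref{eq:replicator}. Since $p^*$ also solves~\eqref{eq:replicator} with the same initial condition (by Theorem~\ref{thm:gradient_flow}), uniqueness of the solution to the replicator pde, under our assumptions, forces $p(\varble,t) = p^*(\varble,t)$ for all $t \ge 0$.

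For the monotone decrease of $\hat{h}_t$, I would differentiate under the integral and use the replicator pde:
\begin{equation*}
\frac{\ud \hat{h}_t}{\ud t} = \int h(x)\,\frac{\partial p}{\partial t}(x,t)\,\ud x = -\beta\int h(x)\bigl(h(x)-\hat{h}_t\bigr) p(x,t)\,\ud x = -\beta\int\bigl(h(x)-\hat{h}_t\bigr)^{2} p(x,t)\,\ud x,
\end{equation*}
where the last equality uses $\int(h - \hat{h}_t)\,p\,\ud x = 0$. This quantity is non-positive, strictly negative unless $h$ is $p$-a.s.\ constant, so $\hat{h}_t$ is monotonically non-increasing. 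Since $h(x) \ge h(\bar{x})$, $\hat{h}_t$ is bounded below by $h(\bar{x})$ and therefore converges to some limit $h_\infty \ge h(\bar{x})$.

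To identify $h_\infty = h(\bar{x})$ and to obtain convergence in probability, I would invoke the conclusion of Appendix~\ref{apdx:exact}, which under Assumption (A3) guarantees that $p^*(\varble,t)$ converges weakly to $\delta_{\bar{x}}$ as $t \to \infty$. Because $p(\varble,t) = p^*(\varble,t)$ by the first part, the same holds for $p$. Weak convergence of $p(\varble,t)$ to $\delta_{\bar{x}}$ immediately gives $X_t^i \to \bar{x}$ in probability: for any $\epsilon > 0$ one approximates the indicator $\ind_{\{|x-\bar{x}|\le\epsilon\}}$ from below by a continuous bounded function and passes to the limit. It also yields $\hat{h}_t \to h(\bar{x})$, after a standard localization/truncation argument exploiting Assumption (A3), which ensures $h > \bar{h}+r$ outside a compact set $D$, so tails contribute negligibly. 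The main obstacle will be justifying the exchange of limit and integration in passing from weak convergence of $p(\varble,t)$ to convergence of $\hat{h}_t$, since $h$ is not assumed bounded; this is handled by the growth control built into Assumptions (A2)--(A3), combined with the monotonicity of $\hat{h}_t$ already established, which supplies the uniform integrability needed to conclude.
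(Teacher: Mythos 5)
Your exactness half is essentially the paper's argument: the paper applies the chain rule to a compactly supported test function along the particle trajectories and uses the weak form of the Poisson equation~\eqref{eqn:EL_phi_intro} to conclude that $p$ is a weak solution of the replicator pde~\eqref{eq:replicator}, then identifies $p$ with $p^*$; your Liouville-equation-plus-substitution route is the same computation in strong form, and your calculation $\frac{\ud}{\ud t}\hat{h}_t=-\beta\int(h-\hat{h}_t)^2\,p\,\ud x\le 0$ is exactly the paper's. (One point the paper is careful about and you gloss over: well-posedness of~\eqref{eqn:EL_phi_intro} with $\rho=p(\varble,t)$ requires a spectral-gap estimate for the posterior at every $t$, which is the content of Lemma~\ref{lem:spec_est}.)

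The convergence half has a genuine gap: you ``invoke the conclusion of Appendix~\ref{apdx:exact}'' that $p^*(\varble,t)\to\delta_{\bar{x}}$ weakly, but that conclusion is precisely part of what this theorem is supposed to establish --- the introduction's weak-convergence claim cites this very appendix for its proof, and it is proved nowhere else in the paper. So your derivation of $X_t^i\to\bar{x}$ in probability and of $\hat{h}_t\to h(\bar{x})$ is circular. What must be supplied is a concentration argument from Assumption (A3). The paper does this by first showing (A3) implies that the sublevel set $A_\epsilon=\{x: h(x)\le\bar{h}+\epsilon\}$ lies inside any prescribed neighborhood of $\bar{x}$ for $\epsilon$ small (its property (P1)), and then using the Lyapunov functional $V_{A_\epsilon}(\mu_t)=-\beta^{-1}\log\mu_t(A_\epsilon)$ together with the replicator dynamics to show $\mu_t(A_\epsilon)$ increases to $1$, which is~\eqref{eq:weakconv_of_p} and yields convergence in probability. (A direct Laplace-type estimate on the explicit formula~\eqref{eqn:evolution} would also work: outside any neighborhood of $\bar{x}$ one has $h\ge\bar{h}+\epsilon'$, by (A3) outside the compact set $D$ and by compactness plus uniqueness of the minimizer inside, so that region's mass decays relative to the mass near $\bar{x}$.) Finally, the identification $\hat{h}_t\to h(\bar{x})$ also rests on this concentration; your appeal to uniform integrability ``handled by (A2)--(A3) and monotonicity'' is an assertion rather than an argument, since monotonicity alone only gives a limit $h_\infty\ge h(\bar{x})$.
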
  

\medskip

The hard part of implementing the particle filter is solving the Poisson
equation~\eqref{eqn:EL_phi_intro}.  For the quadratic Gaussian case -- 
where the objective function $h$ is quadratic and the prior $p_0^*$ is
Gaussian -- the solution can be obtained in an
explicit form.  This is the subject of the \Sec{sec:gaussian}.  In the
quadratic Gaussian case, the infinite-dimensional particle filter can
be replaced by a finite-dimensional filter
involving only the mean and the variance of the Gaussian density.  The
simplification arises because the density admits a parameterized form.
A more general version of this result -- finite-dimensional filters for
general class of parametrized densities --  is the subject of
\Sec{sec:parametric}.  For
the general case where a parametric form of density is not
available, numerical algorithms for approximating the control function
solution appear in \Sec{sec:control_fn}.  

\medskip

\begin{remark}
In the construction of the time-stepping procedure~\eqref{eqn:obj_fn},
we considered a gradient flow with respect to the divergence metric.  In
the optimal transportation literature, the Wasserstein metric is
widely used. In the conference version of this paper~\cite{Chi_CDC13}, it is shown that
the limiting density with the Wasserstein metric evolves according to
the Liouville equation:
\[   
\frac{\partial \rho}{\partial t}(x,t) = \nabla\cdot(\rho(x,t)\nabla
   h(x)).
\]
The particle filter is the gradient descent algorithm:
$$\frac{\ud X^i_t}{\ud t} = -\nabla h(X_t^i).$$  
The divergence metric is chosen here because of the Bayesian nature of
the resulting solution. 
\end{remark}

\subsection{Quadratic Gaussian case}
\label{sec:gaussian}

For the quadratic Gaussian problem, the solution of the Poisson equation
can be obtained in an explicit form as described in the following Lemma.  The
proof appears in the Appendix~\ref{apdx:quadratic-Gaussian}. 

\medskip

\begin{lemma}
Consider the Poisson equation~\eqref{eqn:EL_phi_intro}.  Suppose the objective
function $h$ is a quadratic function such that $h(x)\rightarrow \infty$
as $|x|\rightarrow \infty$ and the density $\rho$ is a Gaussian
with mean $m$ and variance $\Sigma$.  Then the control function
\begin{equation}
u(x) = -\beta\nabla\phi(x) = -\beta{\sf K} ( x - m) - \beta b,
\label{eqn:particle_filter_lin}
\end{equation}
where the affine constant vector
\begin{equation}
b =  \int x (h(x) - \hat{h}) \rho(x) \ud x,\label{eq:b-integral}
\end{equation}
and the gain matrix ${\sf K}={\sf K}^T\succ 0$ is the solution of the Lyapunov equation:
  \begin{align}
   \Sigma {\sf K} + {\sf K} \Sigma  = \int (x - m) (x-m)^T (h(x) -
   \hat{h}) \rho(x) \ud x.\label{eq:K-integral}
  \end{align}
  \label{lem:u-lin}
\end{lemma}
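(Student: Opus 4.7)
Since $h$ is quadratic with constant symmetric Hessian $H := D^2 h$, I would expand $h$ about the mean $m$ of $\rho$ as $h(x) = h(m) + \nabla h(m)^T(x-m) + \frac{1}{2}(x-m)^T H(x-m)$, and then integrating against $\rho = {\cal N}(m,\Sigma)$ gives $\hat h = h(m) + \frac{1}{2}\tr(H\Sigma)$. The natural ansatz is the quadratic $\phi(x) = \frac{1}{2}(x-m)^T K(x-m) + b^T(x-m) + c$ with $K=K^T$; substituting into~\eqref{eqn:EL_phi_intro} and using the Gaussian identity $\nabla\log\rho = -\Sigma^{-1}(x-m)$, the Poisson equation reduces to the polynomial identity
\[
-\tr(K) + (x-m)^T K\Sigma^{-1}(x-m) + b^T\Sigma^{-1}(x-m) \;=\; h(x) - \hat h,\qquad x\in\Re^d.
\]
Matching by degree in $(x-m)$ yields three conditions: the quadratic part, after symmetrization of $K\Sigma^{-1}$, gives $K\Sigma^{-1} + \Sigma^{-1}K = H$, which multiplied on the left and right by $\Sigma$ becomes the Lyapunov equation $\Sigma K + K\Sigma = \Sigma H\Sigma$; the linear part gives $\Sigma^{-1}b = \nabla h(m)$, so $b = \Sigma\nabla h(m)$; and $c$ does not appear in the PDE and is pinned by the normalization $\int\phi\rho\,\ud x = 0$ as $c = -\frac{1}{2}\tr(K\Sigma)$.

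\textbf{Identification with the stated integral forms.}
To translate the coefficient formulas above into the integral expressions in the lemma, I would use Stein's integration-by-parts identity $\int(x-m) f(x)\rho(x)\,\ud x = \Sigma \int \nabla f(x)\,\rho(x)\,\ud x$ for the Gaussian $\rho$. Applied once with $f = h - \hat h$ and $\nabla h(x) = \nabla h(m) + H(x-m)$, it gives $\int x(h - \hat h)\rho\,\ud x = \int(x-m)(h-\hat h)\rho\,\ud x = \Sigma\nabla h(m) = b$, which is~\eqref{eq:b-integral}. Applied a second time (integrating one more $(x-m)$ factor by parts) it yields $\int (x-m)(x-m)^T(h - \hat h)\rho\,\ud x = \Sigma H\Sigma$, so the Lyapunov equation derived above coincides with~\eqref{eq:K-integral}. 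Since $h(x)\to\infty$ as $|x|\to\infty$ forces $H \succ 0$, the right-hand side $\Sigma H\Sigma \succ 0$, and standard theory (via $K = \int_0^\infty e^{-\Sigma s}\,\Sigma H\Sigma\,e^{-\Sigma s}\,\ud s$) furnishes a unique symmetric $K\succ 0$. The quadratic $\phi$ so constructed is smooth with polynomial growth, hence in the admissible class, and the uniqueness part of Theorem~\ref{thm:Poisson} identifies it as the Poisson solution; consequently $u = -\beta\nabla\phi = -\beta K(x-m) - \beta b$, as claimed.

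\textbf{Main obstacle.}
The only subtle consistency point is the constant-term identity $\tr(K) = \frac{1}{2}\tr(H\Sigma)$ produced by the coefficient matching. Taking the trace of $\Sigma K + K\Sigma = \Sigma H\Sigma$ directly only gives $2\tr(\Sigma K) = \tr(H\Sigma^2)$, which is a distinct identity. I would reconcile the two by simultaneously diagonalizing $\Sigma = U D U^T$ and setting $K' = U^T K U$, $H' = U^T H U$: the Lyapunov equation decouples entrywise to $K'_{ij} = d_i d_j H'_{ij}/(d_i + d_j)$, so only the diagonal entries contribute to the trace and $\tr(K) = \sum_i d_i H'_{ii}/2 = \frac{1}{2}\tr(H\Sigma)$. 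This spectral reconciliation is the only non-routine algebraic step.
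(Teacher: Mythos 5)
Your proposal is correct, and it reaches the result by a mildly different organization than the paper. The paper posits the gradient ansatz $\nabla\phi(x)={\sf K}(x-m)+b$ and obtains the integral characterizations \eqref{eq:b-integral} and \eqref{eq:K-integral} \emph{first}, by multiplying the Poisson equation by $x$ and by $(x-m)(x-m)^T$ and integrating by parts -- a step that needs neither Gaussianity nor quadraticity, which is precisely why the same formulas reappear later as the Galerkin affine approximation; only then does it evaluate the Gaussian integrals to get $b=\Sigma H(m-\bar{x})$ and $\Sigma{\sf K}+{\sf K}\Sigma=\Sigma H\Sigma$, and it finishes by substituting the ansatz back into the pde and checking the two sides agree. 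You run the argument in the opposite direction: coefficient matching of the polynomial identity gives ${\sf K}\Sigma^{-1}+\Sigma^{-1}{\sf K}=H$ and $b=\Sigma\nabla h(m)$, and Stein's identity then converts these into the stated integral forms, with uniqueness from Theorem~\ref{thm:Poisson} identifying your quadratic $\phi$ as the solution (for Gaussian $\rho$ the Poincar\'e inequality holds, so this is legitimate). The content is the same algebra either way; the paper's ordering buys the general-$\rho$ interpretation of \eqref{eq:b-integral}--\eqref{eq:K-integral}, while yours makes explicit the constant-term consistency that the paper's substitution check uses implicitly. On that last point, your simultaneous-diagonalization step is correct but heavier than needed: from ${\sf K}\Sigma^{-1}+\Sigma^{-1}{\sf K}=H$, multiply by $\Sigma$ and take the trace to get $2\,\tr({\sf K})=\tr(H\Sigma)$ directly.
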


Using an affine control law~\eqref{eqn:particle_filter_lin}, it is
straightforward to verify that $p(x,t)=p^*(x,t)$ is a Gaussian whose
mean $m_t\rightarrow \bar{x}$ and variance $\Sigma_t\rightarrow 0$.  The proofs
of the following Proposition and the Corollary appear in the Appendix~\ref{apdx:quadratic-Gaussian}:   

\medskip

\begin{proposition}
Consider the particle filter~\eqref{eqn:particle_dyn} with the affine control
law~\eqref{eqn:particle_filter_lin}.  Suppose the objective
function $h$ is a quadratic function such that $h(x)\rightarrow \infty$
as $|x|\rightarrow \infty$ and the prior density $p_0^*$ is a Gaussian
with mean $m_0$ and variance $\Sigma_0$.  Then the posterior density
$p$ is a Gaussian whose mean $m_t$ and variance $\Sigma_t$ evolve
according to
\begin{equation}
\begin{aligned}
   \frac{\ud m_t}{\ud t} &= -\beta {\sf E} \left[ X_t^i (h(X_t^i) - \hat{h}_t) \right], \\
   \frac{\ud \Sigma_t}{\ud t} &= -\beta{\sf E} \left[ (X_t^i - m_t) (X_t^i-m_t)^T (h(X_t^i) -
   \hat{h}_t)\right], 
\end{aligned}
\label{eq:Gaussian-pf}
\end{equation}
where $\hat{h}_t:= {\sf E}[ h(X_t^i)]$. 
\label{prop:Gaussian}
\end{proposition}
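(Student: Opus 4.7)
The plan is to leverage \Lemma{lem:u-lin} together with \Theorem{thm:exact} to first establish that the density remains Gaussian, and then derive the mean and variance ODEs by direct computation along particle trajectories.

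First I would argue that $p_t$ stays Gaussian. Two routes are available. The first is via Bayes' exactness (\Theorem{thm:exact}), which gives $p(\varble,t) = p^*(\varble,t) \propto p_0^*(x)\exp(-\beta h(x)t)$; since $V_0(x) = -\ln p_0^*(x)$ is a positive-definite quadratic (Gaussian prior) and $h$ is quadratic, the exponent remains a positive-definite quadratic in $x$ for all $t\ge 0$, so $p^*(\varble,t)$ is Gaussian with well-defined mean $m_t$ and variance $\Sigma_t$. The second route is dynamical: because $p_t$ is Gaussian, \Lemma{lem:u-lin} applies at each time and the control law~\eqref{eqn:particle_filter_lin} is affine in $x$; the affine ODE $\dot X_t^i = -\beta {\sf K}_t(X_t^i - m_t) - \beta b_t$ preserves Gaussianity of the distribution of $X_t^i$. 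Either route provides the Gaussian structure required to make the following moment computations meaningful.

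Next I would derive the mean equation by taking expectations on both sides of the particle dynamics:
\[
\frac{\ud m_t}{\ud t} = {\sf E}\!\left[\frac{\ud X_t^i}{\ud t}\right] = {\sf E}[-\beta {\sf K}_t (X_t^i - m_t) - \beta b_t] = -\beta b_t,
\]
since ${\sf E}[X_t^i - m_t] = 0$. Substituting the integral expression~\eqref{eq:b-integral} for $b_t$ (with $\rho = p(\varble,t)$) immediately yields the first equation of~\eqref{eq:Gaussian-pf}.

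For the variance, I would differentiate $\Sigma_t = {\sf E}[(X_t^i - m_t)(X_t^i - m_t)^T]$ and use $\dot X_t^i - \dot m_t = -\beta {\sf K}_t (X_t^i - m_t)$ to get
\[
\frac{\ud \Sigma_t}{\ud t} = -\beta\bigl({\sf K}_t \Sigma_t + \Sigma_t {\sf K}_t^T\bigr).
\]
Then the Lyapunov identity~\eqref{eq:K-integral} from \Lemma{lem:u-lin}, applied with $\rho = p(\varble,t)$, replaces $\Sigma_t {\sf K}_t + {\sf K}_t \Sigma_t$ by ${\sf E}[(X_t^i - m_t)(X_t^i - m_t)^T(h(X_t^i)-\hat h_t)]$, producing the second line of~\eqref{eq:Gaussian-pf}. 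The only mildly delicate step is justifying that ${\sf K}_t$ and $b_t$ depend smoothly enough on $t$ (through $m_t, \Sigma_t$) to allow the interchange of $\tfrac{\ud}{\ud t}$ and expectation; this follows from the Gaussian form of $p_t$ and the quadratic growth of $h$, which give finite moments of all orders and continuous dependence of the integrals defining ${\sf K}_t, b_t$ on $(m_t,\Sigma_t)$. The main conceptual point — that the affine structure of the optimal control closes the dynamics on the first two moments — is essentially immediate once Gaussianity is in hand, so the hard part is really just verifying that \Lemma{lem:u-lin} is applicable along the trajectory, which is guaranteed by the Bayesian exactness result.
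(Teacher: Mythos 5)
Your proposal is correct and follows essentially the same route as the paper: the paper observes that with the affine control law the particle dynamics are a linear system with Gaussian prior (so the density stays Gaussian), then takes expectations to get $\frac{\ud m_t}{\ud t}=-\beta b_t$ and differentiates the covariance to get $-\beta(\K_t\Sigma_t+\Sigma_t\K_t)$, substituting the integral identities \eqref{eq:b-integral} and \eqref{eq:K-integral}, exactly as you do. Your alternative Gaussianity argument via Theorem~\ref{thm:exact} is an acceptable extra remark but is not needed (and is slightly circular unless Gaussianity is already known, since Lemma~\ref{lem:u-lin} identifies the affine law with the optimal control only for Gaussian $\rho$); the dynamical route you also give is the paper's argument.
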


\medskip

\begin{corollary}\label{cor:Gaussian}
Under the hypothesis of Proposition~\ref{prop:Gaussian}, with an
explicit form for quadratic objective function $h(x) =
\frac{1}{2}\,(x-\bar{x})^T H (x-\bar{x}) + c$ where $H = H^T \succ 0
$, the expectations on the righthand-side of~\eqref{eq:Gaussian-pf}
are computed in closed-form and the resulting evolution is given by
\begin{subequations}
\begin{align}
   \frac{\ud m_t}{\ud t} &= \beta \Sigma_t H(\bar{x}-m_t), \label{eq:Gaussian_KF_m}\\
   \frac{\ud \Sigma_t}{\ud t} &= -\beta \Sigma_t H \Sigma_t, \label{eq:Gaussian_KF_s}
\end{align}
\end{subequations}
whose explicit solution is given by
\begin{equation}
\begin{aligned}
   m_t &=  m_0 + \Sigma_0S_t^{-1}(\bar{x}-m_0), \\
   \Sigma_t &= \Sigma_0 - \Sigma_0S_t^{-1}\Sigma_0,
\end{aligned}
\label{eq:exact_soln_QG}
\end{equation}
where $S_t := \frac{1}{\beta t}H^{-1}+\Sigma_0$.  In particular,
$m_t\rightarrow \bar{x}$ and $\Sigma_t\rightarrow 0$. 
\end{corollary}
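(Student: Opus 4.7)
The plan is to proceed in three stages: compute the Gaussian expectations in~\eqref{eq:Gaussian-pf} in closed form to derive~\eqref{eq:Gaussian_KF_m}--\eqref{eq:Gaussian_KF_s}, then solve the matrix Riccati-type ODE for $\Sigma_t$, and finally solve the linear ODE for $m_t$ and recognize the result in the form~\eqref{eq:exact_soln_QG}.

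For the expectations, I would write $X_t^i = m_t + \xi$ with $\xi\sim{\cal N}(0,\Sigma_t)$. A short calculation gives $\hat{h}_t = \tfrac{1}{2}(m_t-\bar{x})^T H(m_t-\bar{x}) + \tfrac{1}{2}\mathrm{tr}(H\Sigma_t)+c$, hence
\[
h(X_t^i)-\hat{h}_t = (m_t-\bar{x})^T H\xi + \tfrac{1}{2}\xi^T H\xi - \tfrac{1}{2}\mathrm{tr}(H\Sigma_t).
\]
Multiplying by $X_t^i = m_t+\xi$ and taking expectation, the $m_t$-prefactor vanishes (since $\mathsf{E}[\xi^T H\xi]=\mathrm{tr}(H\Sigma_t)$) and the odd Gaussian moments vanish, leaving $\mathsf{E}[X_t^i(h(X_t^i)-\hat{h}_t)] = \Sigma_t H(m_t-\bar{x})$, which yields~\eqref{eq:Gaussian_KF_m}. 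For the covariance expression, the step I expect to be the most delicate is the fourth-order Gaussian moment $\mathsf{E}[\xi\xi^T\,\xi^T H\xi]$, which by Isserlis' theorem (together with $H=H^T$) equals $2\Sigma_t H\Sigma_t + \Sigma_t\,\mathrm{tr}(H\Sigma_t)$. Combining the three terms yields $\mathsf{E}[\xi\xi^T(h(X_t^i)-\hat{h}_t)] = \Sigma_t H\Sigma_t$ after exact cancellation of the $\mathrm{tr}(H\Sigma_t)$ contributions, giving~\eqref{eq:Gaussian_KF_s}.

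For the covariance ODE, the clean move is to set $P_t:=\Sigma_t^{-1}$; differentiating and using~\eqref{eq:Gaussian_KF_s} gives $\dot P_t = -\Sigma_t^{-1}\dot\Sigma_t\Sigma_t^{-1} = \beta H$, so $\Sigma_t^{-1} = \Sigma_0^{-1}+\beta t H$. The Woodbury identity with $A=\Sigma_0^{-1}$, $U=V=I$, $C=\beta tH$ then gives
\[
\Sigma_t = (\Sigma_0^{-1}+\beta tH)^{-1} = \Sigma_0 - \Sigma_0\bigl(\tfrac{1}{\beta t}H^{-1}+\Sigma_0\bigr)^{-1}\Sigma_0 = \Sigma_0 - \Sigma_0 S_t^{-1}\Sigma_0,
\]
which is the second line of~\eqref{eq:exact_soln_QG}. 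For the mean, let $e_t := \bar{x}-m_t$; from~\eqref{eq:Gaussian_KF_m}, $\dot e_t = -\beta\Sigma_t H e_t$. The key observation is that $\Sigma_t^{-1}e_t$ is a constant of motion, since
\[
\tfrac{d}{dt}(\Sigma_t^{-1}e_t) = \beta H e_t + \Sigma_t^{-1}(-\beta\Sigma_t H e_t) = 0.
\]
Therefore $e_t = \Sigma_t\Sigma_0^{-1}(\bar{x}-m_0) = (I-\Sigma_0 S_t^{-1})(\bar{x}-m_0)$, which rearranges to the first line of~\eqref{eq:exact_soln_QG}.

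Finally, the asymptotic claims follow directly from the explicit form: as $t\to\infty$, $S_t\to\Sigma_0$, so $\Sigma_0 S_t^{-1}\to I$, giving $\Sigma_t\to 0$ and $m_t\to\bar{x}$. The main obstacle is purely bookkeeping in the fourth-moment calculation; everything else is an ODE manipulation and one application of the matrix inversion lemma.
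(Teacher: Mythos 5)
Your proposal is correct, and for the derivation of \eqref{eq:Gaussian_KF_m}--\eqref{eq:Gaussian_KF_s} it is essentially the paper's route: the paper obtains these ODEs by substituting into \eqref{eq:Gaussian-pf} the closed-form values of the two Gaussian integrals, $\int x(h-\hat{h})\rho\,\ud x=\Sigma H(m-\bar{x})$ and $\int (x-m)(x-m)^T(h-\hat{h})\rho\,\ud x=\Sigma H\Sigma$ (equations \eqref{eq:bexplict}--\eqref{eq:Kexplict}, stated in the proof of Lemma~\ref{lem:u-lin}); your Isserlis/fourth-moment computation is exactly the evaluation that the paper asserts without detail. Where you genuinely go beyond the paper is the second half: the appendix proof of the corollary stops at the ODEs and never verifies the explicit solution \eqref{eq:exact_soln_QG} or the limits $m_t\to\bar{x}$, $\Sigma_t\to 0$ (presumably leaving direct substitution to the reader). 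Your argument supplies this cleanly and constructively: $\Sigma_t^{-1}$ evolves linearly, $\Sigma_t^{-1}=\Sigma_0^{-1}+\beta t H$, the Woodbury identity converts this to the stated form with $S_t=\frac{1}{\beta t}H^{-1}+\Sigma_0$, and the conserved quantity $\Sigma_t^{-1}(\bar{x}-m_t)$ gives the mean formula; the asymptotics then read off immediately. This is a worthwhile addition, since it also explains \emph{why} the solution has the form \eqref{eq:exact_soln_QG} rather than merely checking it.
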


\medskip

In practice, the affine control law~\eqref{eqn:particle_filter_lin} is implemented as:
  \begin{equation}
   \frac{\ud X^i_t }{\ud t} = -\beta{\sf K}_t^{(N)} (X^i_t-m_t^{(N)}) -
   \beta b_t^{(N)} =: u_t^{i},
   \label{eqn:particle_filter_lin_implement}
  \end{equation}
where the terms are approximated empirically from the ensemble
$\{X_t^i\}_{i=1}^N$. The algorithm appears in 
Table~\ref{alg:affine} (the dependence on time $t$ is suppressed). 

As $N\rightarrow \infty$, the approximations become exact 
and~\eqref{eqn:particle_filter_lin} represents the mean-field limit of
the finite-$N$ control in~\eqref{eqn:particle_filter_lin_implement}.
Consequently, the
empirical distribution of the ensemble approximates the
posterior distribution (density) $p^{*}(x,t)$.

\medskip

\begin{remark}
The finite-dimensional system~\eqref{eq:Gaussian-pf} is the
optimization counterpart of the
Kalman filter.    
Likewise the particle filter~\eqref{eqn:particle_filter_lin_implement}
is the counterpart of the ensemble Kalman filter.  
While the affine control law~\eqref{eqn:particle_filter_lin} is
optimal for the quadratic Gaussian case, it can be implemented for more
general non-quadratic non-Gaussian settings - as long as the various
approximations can be obtained at each step.  The situation is
analogous to the filtering setup where the Kalman filter is often used as
an approximate algorithm even in nonlinear non-Gaussian settings. 
      
\end{remark}

\begin{algorithm}
 \caption{Affine approximation of the control function}
 \begin{algorithmic}[1]
     \REQUIRE $\{X^i\}_{i=1}^N$, $\{h(X^i)\}_{i=1}^N$, $\beta$ 
     \ENSURE $\{u^i\}_{i=1}^N$ \medskip
     \STATE Calculate $m^{(N)} := \frac{1}{N} \sum_{i=1}^N X^i$, 
     \STATE Calculate $\Sigma^{(N)} := \frac{1}{N} \sum_{i=1}^N \left(X^i - m^{(N)}\right) \left(X^i - m^{(N)}\right)^T$
     \STATE Calculate $\hat{h}^{(N)}:= \frac{1}{N}\sum_{i=1}^N h(X^i)$
     \STATE  Calculate $b^{(N)} := \frac{1}{N} \sum_{i=1}^N X^i \; \left( h(X^i) - \hat{h}^{(N)} \right)$
     \STATE Calculate \[C^{(N)}:=\frac{1}{N} \sum_{i=1}^N \left( X^i -m^{(N)} \right) 
     \left( X^i -m^{(N)} \right)^T
     \left( h(X^i)-\hat{h}^{(N)} \right)\]
     \STATE Calculate  ${\sf K}^{(N)}$ by solving~ $\Sigma^{(N)} {\sf K}^{(N)} + {\sf K}^{(N)} \Sigma^{(N)}=C^{(N)}$
     \STATE  Calculate  $u^i=-\beta{\sf K}^{(N)} (X^i-m^{(N)}) - \beta b^{(N)}$
 \end{algorithmic}
 \label{alg:affine}
\end{algorithm}

\subsection{Parametric case}
\label{sec:parametric}
Consider next the case where the density has a known parametric form,
\begin{equation}
p(x,t) = \Varrho(x;\theta_t),
\label{eq:p-param}
\end{equation}
where $\theta_t\in \Re^M$ is the parameter vector.  For example, in
the quadratic Gaussian problem, $\Varrho$ is a Gaussian with
parameters $m_t$ and $\Sigma_t$.   

For the parametric density $\Varrho(x;\vartheta)$, $\frac{\partial }{\partial \vartheta} \left(\log
\Varrho(x;\vartheta)\right)$ is a $M\times 1$ column vector whose
$k^{\text{th}}$ entry,
\[
\left[ \frac{\partial }{\partial \vartheta} (\log
\Varrho(x;\vartheta)) \right]_{k} = \frac{\partial }{\partial \vartheta_k} \left(\log
\Varrho(x;\vartheta)\right),
\]
for $k=1,\hdots,M$.

The Fisher information matrix is a $M\times M$ matrix:
\begin{equation}
G_{(\vartheta)} := \int\frac{\partial }{\partial \vartheta} (\log
\Varrho(x;\vartheta)) \; \left[\frac{\partial }{\partial \vartheta} (\log
\Varrho(x;\vartheta))\right]^T \Varrho(x;\vartheta)\ud x.
\label{eq:Fisher}
\end{equation}
By construction, $G_{(\vartheta)}$ is symmetric and positive
semidefinite.  In the following, it is furthermore assumed that $G_{
(\vartheta)}$ is strictly positive definite, and thus invertible, for all
$\vartheta\in \Re^M$.  

In terms of the parameter, 
\begin{equation*}
e(\vartheta):=\int h(x)\Varrho(x;\vartheta)\ud x,
\end{equation*}
and its gradient is a $M\times 1$ column vector:
\begin{equation}
\nabla e (\vartheta) = \int h(x)\frac{\partial}{\partial \vartheta} \left(\log\Varrho(x;\vartheta)\right)\Varrho(x;\vartheta)\ud x.
\label{eq:grad-e}
\end{equation}

We are now prepared to describe the induced evolution for the parameter vector
$\theta_t$. The proof of the following proposition appears in the
Appendix~\ref{apdx:parametric}. 

\medskip

\begin{proposition}
Consider the particle filter
\eqref{eqn:particle_dyn}-\eqref{eqn:gradient_gain}.  Suppose the
density admits the parametric form~\eqref{eq:p-param} whose Fisher
information matrix, defined in~\eqref{eq:Fisher}, is assumed to be 
invertible.  Then the parameter vector $\theta_t$ is a solution of the
following ordinary differential equation,
\begin{equation}
\frac{\ud\theta_t}{\ud t} = -\beta G^{-1}_{(\theta_t)}\nabla e(\theta_t) .
\label{eq:natural-grad}
\end{equation}
\label{prop:natural-grad}
\end{proposition}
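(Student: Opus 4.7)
The plan is to couple the parametric ansatz $p(x,t)=\Varrho(x;\theta_t)$ with the replicator dynamics established for the closed-loop particle filter, and then project the resulting pointwise identity onto the $M$-dimensional tangent space using the Fisher information metric.

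First I would observe that by Theorem~\ref{thm:exact} the density $p(x,t)$ produced by the particle filter coincides with $p^*(x,t)$ and therefore satisfies the replicator pde
\begin{equation*}
\frac{\partial p}{\partial t}(x,t) = -\beta\bigl(h(x)-\hat h_t\bigr)\,p(x,t),
\end{equation*}
where $\hat h_t=\int h(x)\,p(x,t)\,\ud x = e(\theta_t)$. On the other hand, differentiating $\Varrho(x;\theta_t)$ in $t$ by the chain rule gives
\begin{equation*}
\frac{\partial p}{\partial t}(x,t) = \frac{\partial \Varrho}{\partial \vartheta}(x;\theta_t)\cdot \frac{\ud\theta_t}{\ud t} = \Varrho(x;\theta_t)\,\frac{\partial}{\partial\vartheta}\!\bigl(\log\Varrho(x;\theta_t)\bigr)\cdot\frac{\ud\theta_t}{\ud t}.
\end{equation*}
Dividing by $\Varrho(x;\theta_t)>0$ and equating the two expressions yields the pointwise identity
\begin{equation*}
\frac{\partial}{\partial\vartheta}\!\bigl(\log\Varrho(x;\theta_t)\bigr)\cdot\frac{\ud\theta_t}{\ud t} = -\beta\bigl(h(x)-e(\theta_t)\bigr),\qquad x\in\Re^d.
\end{equation*}

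Next I would project this identity onto the score functions: multiply both sides by the column vector $\frac{\partial}{\partial\vartheta}(\log\Varrho(x;\theta_t))$ and integrate against $\Varrho(x;\theta_t)\,\ud x$. The left-hand side becomes $G_{(\theta_t)}\,\frac{\ud\theta_t}{\ud t}$ by definition~\eqref{eq:Fisher}. For the right-hand side, the key simplification is the standard score identity
\begin{equation*}
\int \frac{\partial}{\partial\vartheta}\!\bigl(\log\Varrho(x;\theta_t)\bigr)\,\Varrho(x;\theta_t)\,\ud x \;=\; \frac{\partial}{\partial\vartheta}\!\int \Varrho(x;\theta_t)\,\ud x \;=\; 0,
\end{equation*}
which kills the $e(\theta_t)$ term. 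What remains is exactly $-\beta\,\nabla e(\theta_t)$ by definition~\eqref{eq:grad-e}. Using invertibility of $G_{(\theta_t)}$ gives the claimed ode~\eqref{eq:natural-grad}.

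The main conceptual obstacle is justifying the pointwise identity obtained by equating the two time-derivatives: this is only legitimate if the replicator flow preserves the parametric family, i.e.\ if $(h(x)-e(\theta_t))\,\Varrho(x;\theta_t)$ actually lies in the tangent space spanned by $\{\partial_{\theta_k}\Varrho(\cdot;\theta_t)\}_{k=1}^M$. The proposition takes this as a hypothesis (``suppose the density admits the parametric form''), so strictly speaking the proof reduces to the algebraic manipulation above; however I would include a remark that when the family is not invariant, the same projection step produces the natural-gradient flow as the best Fisher-metric approximation, recovering the interpretation of~\cite{kakade}. The remaining technicalities (exchanging $\partial_\vartheta$ with $\int\,\ud x$ and differentiating under the integral in~\eqref{eq:grad-e}) are routine under the standing Assumptions (A1)--(A2) together with mild regularity of $\vartheta\mapsto\Varrho(x;\vartheta)$.
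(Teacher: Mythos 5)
Your proposal is correct and follows essentially the same route as the paper's proof: differentiate the parametric ansatz by the chain rule, equate with the replicator pde (which governs the filter density by Theorem~\ref{thm:exact}), project onto the score vector, and identify the resulting terms with $G_{(\theta_t)}$ and $\nabla e(\theta_t)$ before inverting the Fisher matrix. Your explicit use of the zero-mean score identity to eliminate the $\hat h_t$ term, and the remark on tangent-space invariance of the parametric family, are details the paper leaves implicit but do not change the argument.
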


\medskip

\begin{remark}
The filter~\eqref{eq:natural-grad} is referred to as the {\em natural
gradient}; cf.,~\cite{kakade}.  There are several variational
interpretations:
\begin{romannum}
\item The filter can be obtained via a time stepping procedure,
  analogous to~\eqref{eqn:obj_fn}.  
  The sequence $\{\theta_n\}_{n=1}^N$ is inductively defined as a
  minimizer of the function,
\begin{equation*}
{\sf I}(\theta|\theta_{n-1}):=~\left[\frac{1}{\Delta t_n}{\sf
    D}(\Varrho(\cdot;\theta)|\Varrho(\cdot;\theta_{n-1}))+ \beta e(\theta)\right].
\end{equation*} 
On taking the limit as $\Delta t_n \to 0$, one arrives at the
filter~\eqref{eq:natural-grad}. 
\item The optimal control interpretation of~\eqref{eq:natural-grad} is
  based on the
  Pontryagin's minimum principle (see also Remark~\ref{rem:pontmp}).  For
  the finite-dimensional problem, the Hamiltonian
\[
{\sf H}(\theta,{\sf q},u) = {\sf L} (\theta,u) + {\sf q}\cdot u,
\]  
where ${\sf q}\in\Re^M$ is the momentum.  With $\dot{\theta}=u$,
the counterpart of~\eqref{eq:limitinganal} is
\begin{align*}
 \frac{1}{\Delta t_n}{\sf
   D}(\Varrho^{(u)}(\cdot;\theta)|\Varrho(\cdot;\theta_{n-1})) &=\frac{1}{2} u^T G_{(\theta)} u
    + o(\Delta t_n).
\end{align*}  
With $\frac{1}{2} \; u^T G_{(\theta)} u$ as the control cost component
in the Lagrangian, the
first order optimality condition gives
\[
G_{(\theta)} u = - {\sf q} = - \beta \nabla e (\theta),
\]
where we have used the fact that $\beta e(\theta)$ is the value
function.  
Note that it was not necessary to write the explicit form of the
Lagrangian to obtain the optimal control.  
\item Finally, the filter~\eqref{eq:natural-grad} represents the
  gradient flow (in $\Re^M$) for the objective function $e(\theta)$ with
  respect to the Riemannian metric
  $\big<v,w\big>_{\theta}=v^TG_{(\theta)}w$ for all $v,w \in \Re^M$. 
\end{romannum}

\end{remark}

\medskip
\begin{example}
In the quadratic Gaussian case, the natural gradient
algorithm~\eqref{eq:natural-grad} with parameters $m_t$ and $\Sigma_t$ reduces to~\eqref{eq:Gaussian-pf}. 
\label{ex:param-Gaussian}
\end{example}

\medskip

\begin{remark}
While the systems~\eqref{eq:natural-grad} and~\eqref{eq:Gaussian-pf}
are finite-dimensional, the righthand-sides will still need to be
approximated empirically.  The convergence properties of a class of
related algorithms is studied using a stochastic approximation
framework in~\cite{Hu_StoA12}. 
\end{remark}

\medskip

The stochastic approximation is not necessary if the problem admits a
certain affine structure in the parameters: 

\begin{example}\label{ex:param-alpha}
Suppose the density is of the following exponential parametric form:
\begin{equation*}
\Varrho(x;\vartheta) = \frac{\exp( \vartheta \cdot \psi(x))}{\int  \exp(
  \vartheta \cdot \psi(y)) \ud y},
\end{equation*}
where $\vartheta\in\Re^M$, and
$\psi(x):=(\psi_1(x),\psi_2(x),\hdots,\psi_M(x))$ is a {\em given}
set of linearly independent (basis) functions, expressed here as a
vector. Furthermore, suppose $h$ is expressed
as a linear combination of these functions:
\begin{equation*}
h(x) = \alpha \cdot \psi(x),
\end{equation*}
where $\alpha \in \Re^M$. 

The elements of the Fisher information matrix~\eqref{eq:Fisher} and the
gradient~\eqref{eq:grad-e} are given by the respective formulae:
\begin{align*}
[G]_{lk}(\theta)= \int
(\psi_l(x)-\hat{\psi}_l)(\psi_k(x)-\hat{\psi}_k)\Varrho(x;\theta) \ud
x,\\
[\nabla e]_k(\theta) = \int (\alpha\cdot\psi(x)) \; (\psi_k(x)-\hat{\psi}_k)\Varrho(x;\theta)\ud x,
\end{align*}
where $\hat{\psi}_k:=\int \psi_k(x)\Varrho(x;\theta)\ud x$.  The
ode~\eqref{eq:natural-grad} simplifies to
\begin{equation*}
\frac{\ud \theta_t}{\ud t} = -\beta \alpha.
\end{equation*}
\end{example}
Although interesting, there do not appear to be any non-trivial
examples where the affine structure applies.

\section{Numerical Approximation of Control Function}
\label{sec:control_fn}

The Poisson equation~\eqref{eqn:EL_phi_intro} is expressed as
\begin{flalign}
\label{eqn:EL_phi_prelim_vn}
\begin{aligned}
-\Delta_\rho \phi & = h-\hat{h},\\
\int \phi \rho \ud x & = 0,
\end{aligned} &
\end{flalign}
where $\rho\in{\cal P}$ and $\Delta_\rho
\phi:=\frac{1}{\rho}\nabla \cdot (\rho\nabla \phi)$. 
The equation is solved for each time to obtain 
the control function $u(x):=-\beta \nabla\phi(x)$.  The existence-uniqueness theory
for the solution $\phi$ is summarized in Appendix~\ref{apdx:Poisson}.  

\medskip

\noindent \textbf{Problem statement:} Given $N$ 
samples $\{X^1,\hdots,X^i,\hdots,X^N\}$ drawn i.i.d. from $\rho$, approximate
the vector-valued control input $\{u^1,\hdots,u^i,\hdots,u^N\}$, where
$u^i:=u(X^i)=-\beta\,\nabla\phi(X^i)$.  
The density $\rho$ is not explicitly known.  

\subsection{Galerkin Approximation}
\label{sec:Galerkin}

The Galerkin approximation is based upon the weak
form of the Poisson equation~\eqref{eqn:EL_phi_prelim_vn} (see~\eqref{eq:Poisson-weak} in Appendix~\ref{apdx:Poisson}). 
Using the notation $<\cdot,\cdot>$ for
the inner product in
$L^2(\Re^d;\rho)$, the weak form is
succinctly expressed as follows: Obtain $\phi\in H_0^1(\Re^d;\rho)$ such that
\[
\big<\nabla \phi, \nabla \psi \big> = \big<h-\hat{h},\psi\big>,\quad
\forall \; \psi \in H^1(\Re^d;\rho).
\]
The Galerkin approximation 
involves solving this equation in a finite-dimensional subspace
$S:=\text{span}\{\psi_1,\ldots,\psi_M\} \subset H^1_0$, where
$(\psi_1(x),\psi_2(x),\hdots,\psi_M(x))=:\psi(x)$ is a {\em given}
set of linearly independent (basis) functions, expressed as a vector. 
The solution $\phi$ is approximated as
\begin{equation*}
\phiM(x) = c \cdot \psi(x),
\end{equation*}
where the vector
$c \in \Re^M$ is selected such that 
\begin{equation}
\big<\nabla \phiM, \nabla \psi \big> = \big<h-\hat{h},\psi\big>,\quad
\forall \; \psi \in S.
\label{eq:Poisson-weak-finite}
\end{equation}

The finite dimensional approximation~\eqref{eq:Poisson-weak-finite} is
a linear matrix equation
\begin{equation}
Ac=b,
\label{eq:Acb}
\end{equation}
where $A$ is a $M\times M$ matrix and $b$ is a $M\times 1$ vector
whose entries are given by the respective formulae
\begin{align*}
[A]_{lk} & = \big<\nabla\psi_l,\nabla
\psi_k\big>,\\
[b]_l & = \big<h-\hat{h},\psi_l\big>.
\end{align*}

It is next shown that the affine control
law~\eqref{eqn:particle_filter_lin}, introduced in
Lemma~\ref{lem:u-lin} as an exact solution for the quadratic Gaussian
case, is in fact a Galerkin solution.

\medskip

\begin{example} Two types of approximations follow from consideration
  of first order and second order polynomials as basis functions:
\begin{romannum}
\item The {\em constant approximation} is obtained by taking basis
  functions as $\psi_l(x)=x_l$ for $l=1,\ldots,d$.  With this choice, $A$ is the identity matrix
  and the control function is a constant vector:
\begin{equation*}
u(x)=-\beta b = -\beta \int x\, (h(x)-\hat{h})\, \pr(x)\ud x.
\end{equation*}

\item The {\em affine approximation} is obtained by taking the basis
  functions from the family of quadratic polynomials, $\psi_l(x)=x_l$
  for $l=1,\ldots,d$ and $\psi_{lk}(x)= x_{l} \, x_{k}$ for
  $1\le l \le k \le d$.  In this case,
\begin{equation*}
u(x) = -\beta \K(x-m)- \beta b,
\end{equation*}
where the vector $-b$ is the constant approximation, the matrix $\K$
is the solution of the Lyapunov equation~\eqref{eq:K-integral} and
$m:=\int x \, \rho(x) \ud x$ is the mean.  The calculation is included
as part of the proof of Lemma~\ref{lem:u-lin} given in Appendix~\ref{apdx:quadratic-Gaussian}.  
Note that the Galerkin derivation of the affine control
law does not require that the density be Gaussian.  
\end{romannum}
\end{example}

\medskip

\begin{algorithm}[t]
 \caption{Galerkin approximation of control function}
 \begin{algorithmic}[1]
     \REQUIRE $\{X^i\}_{i=1}^N$, $\{h(X^i)\}_{i=1}^N$, $\{\psi_1,\ldots,\psi_M\}$, $\beta$
     \ENSURE $\{u^i\}_{i=1}^N$, \medskip
     \STATE $[A^{(N)}]_{lk} :=\frac{1}{N}\sum_{i=1}^N \nabla \psi_l(X^i) \cdot \nabla \psi_k(X^i),$ for $l,k=1,\ldots,M$
     \STATE $[b^{(N)}]_k :=\frac{1}{N}\sum_{i=1}^N
     \psi_k(X^i)(h(X^i)-\hat{h}^{(N)})$, for $k=1,\ldots,M$ 
     \STATE Calculate $c^{(N)}$ by solving $A^{(N)} c^{(N)}= b^{(N)}$
     \STATE $u^i= -\beta \sum_{k=1}^M c^{(N)}_k \nabla \psi_k(X^i)$
 \end{algorithmic}
 \label{alg:galerkin}
\end{algorithm}

In practice,  the matrix $A$ and the vector $b$ are approximated
empirically, and the equation~\eqref{eq:Acb} solved to obtain the
empirical approximation of $c$, denoted as $c^{(N)}$ (see
Table~\ref{alg:galerkin} for the Galerkin algorithm).  In terms of
this empirical approximation, the control function is approximated as
\[
u(x) = -\beta\nabla\phi^{(M,N)}(x) := -\beta c^{(N)} \cdot \nabla \psi(x).
\]

\begin{example}
With a single basis function $\psi(x) = h(x)$, the approximate Galerkin solution is
\[
\phi(x) = \frac{\int (h(x) - \hat{h})^2 \rho(x) \ud x}{\int |\nabla
  h(x) |^2 \rho(x) \ud x} \; h(x).
\]
Using an empirical approximation, the finite-$N$ system is the gradient-descent algorithm:
\[
\frac{\ud X^i_t }{\ud t} = -\beta \frac{\sum_{i=1}^N (h(X_t^i) -
  \hat{h}^{(N)})^2}{\sum_{i=1}^N |\nabla h(X_t^i) |^2}\; \nabla h(X_t^i).
\] 
\end{example}

\medskip

The following Proposition
provides error bounds for the special case where the basis functions
are chosen to be the eigenfunctions of the Laplacian $\Delta_{\rho}$. 
The proof appears in Appendix~\ref{apdx:Galerkin}.

\medskip

\begin{proposition} Consider the empirical Galerkin approximation of the Poisson
  equation~\eqref{eqn:EL_phi_prelim_vn} on the space
  $S:=\text{span}\{e_1,e_2,\hdots,e_M\}$ of the first $M$
  eigenfunctions of $\Delta_{\rho}$.  Fix $M<\infty$.  Then there
  exists a unique solution for the matrix equation~\eqref{eq:Acb}.
  And there is a sequence of random variables $\{\epsilon_N\}$ such that  
\begin{equation*}
\|\nabla\phi-\nabla\phiMN \|_2  \leq  \frac{1}{\sqrt{\lambda_M}}\|h-\Pi_S h\|_2+ \epsilon_N,
\end{equation*}
where $\epsilon_N\rightarrow 0$ as $N\rightarrow\infty$ a.s, and
$
\Pi_S h :=  \sum_{k=1}^M <e_k,h>e_k
$ is the projection of the function $h$ onto $S$. 
\label{prop:galerkin}
\end{proposition}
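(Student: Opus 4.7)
The plan is to split the error via the triangle inequality
\[
\|\nabla \phi - \nabla \phiMN\|_2 \leq \|\nabla \phi - \nabla \phiM\|_2 + \|\nabla \phiM - \nabla \phiMN\|_2,
\]
where $\phiM$ is the exact (population) Galerkin solution in $S$ and $\phiMN$ is its empirical realization. The observation that makes the analysis clean is that with the eigenbasis $\{e_k\}$ the Galerkin matrix diagonalizes: integration by parts in $L^2(\Re^d;\rho)$ gives $\big<\nabla e_l,\nabla e_k\big>=-\big<e_l,\Delta_{\rho} e_k\big>=\lambda_k\,\delta_{lk}$, so the population matrix is $A=\mathrm{diag}(\lambda_1,\ldots,\lambda_M)\succ 0$ with unique coefficients $c_k=\lambda_k^{-1}\big<h,e_k\big>$. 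Existence and uniqueness for the empirical linear system~\eqref{eq:Acb} then follow because, by the strong law of large numbers applied entrywise, $A^{(N)}\to A$ almost surely, so $A^{(N)}$ is invertible for all $N$ sufficiently large.

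To control the truncation error $\|\nabla\phi-\nabla\phiM\|_2$, I will expand $\phi$ and $\phiM$ in the eigenbasis. The family $\{e_k\}_{k\geq 1}$ is an orthonormal basis of $L_0^2(\Re^d;\rho)$, so $h-\hat h=\sum_{k\geq 1}\big<h,e_k\big>e_k$ and hence $\|h-\Pi_S h\|_2^2=\sum_{k>M}\big<h,e_k\big>^2$. Taking the $L^2(\Re^d;\rho)$ inner product of $-\Delta_{\rho}\phi=h-\hat h$ with $e_k$ yields $\big<\phi,e_k\big>=\lambda_k^{-1}\big<h,e_k\big>$, so $\phi-\phiM=\sum_{k>M}\lambda_k^{-1}\big<h,e_k\big>\,e_k$. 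A second integration by parts then gives
\[
\|\nabla(\phi-\phiM)\|_2^2=\sum_{k>M}\lambda_k\cdot\frac{\big<h,e_k\big>^2}{\lambda_k^2}=\sum_{k>M}\frac{\big<h,e_k\big>^2}{\lambda_k}\leq\frac{1}{\lambda_M}\|h-\Pi_S h\|_2^2,
\]
by monotonicity of the eigenvalues of $-\Delta_{\rho}$. Taking square roots produces the first term of the claimed bound.

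For the sampling error $\epsilon_N:=\|\nabla\phiM-\nabla\phiMN\|_2$, since both $\phiM$ and $\phiMN$ lie in $S$,
\[
\epsilon_N^2=(c-c^{(N)})^T A\,(c-c^{(N)}).
\]
Componentwise SLLN gives $A^{(N)}\to A$ and $b^{(N)}\to b$ a.s.; combined with the invertibility of the diagonal matrix $A$ this forces $c^{(N)}\to c$ a.s., and hence $\epsilon_N\to 0$ a.s. The main obstacle I anticipate is not in the algebra above but in rigorously justifying the integrability hypotheses needed to apply the SLLN to each entry of $A^{(N)}$ and $b^{(N)}$: this requires controlling the growth of the eigenfunctions and their gradients against the tail of $\rho$, which is where the existence/regularity theory for $\Delta_{\rho}$ from Appendix~\ref{apdx:Poisson} must be invoked.
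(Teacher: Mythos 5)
Your proposal is correct and follows the same skeleton as the paper's proof: triangle inequality into a bias term and a sampling term, the eigenfunction expansion giving $\|\nabla\phi-\nabla\phiM\|_2^2=\sum_{k>M}\lambda_k^{-1}\langle h,e_k\rangle^2\le\lambda_M^{-1}\|h-\Pi_S h\|_2^2$, and a law-of-large-numbers argument for the empirical error. The one substantive divergence is in how the empirical system is handled. You solve the fully empirical equation $A^{(N)}c^{(N)}=b^{(N)}$ and obtain invertibility of $A^{(N)}$ only for $N$ sufficiently large (almost surely); note that this is unavoidable in your formulation, since $A^{(N)}$ is a sum of $N$ Gram matrices of rank at most $d$ and is therefore singular whenever $Nd<M$, so your argument proves a slightly weaker version of the proposition's ``unique solution'' clause. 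The paper instead exploits that, once the basis is the eigenbasis, the stiffness matrix is known exactly ($A=\mathrm{diag}(\lambda_1,\dots,\lambda_M)$) and only $b$ is empiricized: the empirical solution is taken as $\phiMN=\sum_{k=1}^M\lambda_k^{-1}\bigl(\frac1N\sum_i e_k(X^i)h(X^i)\bigr)e_k$, which makes uniqueness immediate for every $N$ and yields the explicit expression $\epsilon_N=\sum_{k=1}^M\lambda_k^{-1}|z_k^{(N)}|^2$ with $z_k^{(N)}$ the empirical fluctuation of $\langle e_k,h\rangle$; your quadratic-form identity $\epsilon_N^2=(c-c^{(N)})^TA(c-c^{(N)})$ is the analogous statement and, combined with $A^{(N)}\to A$, $b^{(N)}\to b$ a.s., does give $\epsilon_N\to 0$ a.s. Finally, the obstacle you flag at the end is not really there: the strong law applies entrywise by Cauchy--Schwarz alone, since $\nabla e_l\cdot\nabla e_k$ is integrable because each $e_k\in H^1(\Re^d;\rho)$ with $\|\nabla e_k\|_2^2=\lambda_k$, and $e_k(h-\hat h)$ is integrable because $e_k\in L^2(\Re^d;\rho)$ and $h\in L^2(\Re^d;\rho)$ (this is exactly the bound $\mathsf{E}|e_k(X)h(X)|\le\|h\|_2$ used in the paper); no pointwise growth estimates on the eigenfunctions are needed.
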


\medskip

\begin{remark}[Variational interpretation] Suppose $u=-\beta\nabla\phi$ is
  the exact control function obtained from solving the weak form of
  the Poisson equation~\eqref{eqn:EL_phi_prelim_vn}.  
The Galerkin solution $\nabla\phiM$ is the optimal least-square
approximation of $\nabla\phi$ in $S\subset H^1(\Re^d;\rho)$, i.e, 
 \begin{equation*}
 \phiM =\argmin_{\psi \in S} ~\|\nabla \phi-\nabla \psi\|_2.
 \end{equation*}
The Galerkin approximation~\eqref{eq:Poisson-weak-finite} is simply
the statement of the projection theorem. 
\end{remark}

\begin{algorithm}[h]
 \caption{Kernel-based approximation of control function}
 \begin{algorithmic}[1]
     \REQUIRE $\{X^i\}_{i=1}^N$, $\{h(X^i)\}_{i=1}^N$, $\Phi_{\text{prev}}$,
     $\beta$ , $L$
     \ENSURE $\{u^i\}_{i=1}^N$ \medskip
     \STATE Calculate $g_{ij}:=\exp(-|X^i-X^j|^2/4\epsilon)$ for $i,j=1$ to $N$\medskip
     \STATE Calculate $k_{ij}:=\frac{g_{ij}}{\sqrt{\sum_l g_{il}}\sqrt{\sum_l g_{jl}}}$ for $i,j=1$ to $N$
     \STATE Calculate $T_{ij}:=\frac{k_{ij}}{\sum_l k_{il}}$ for $i,j=1$ to $N$
     \STATE Calculate $\hat{h}^{(N)}=\frac{1}{N}\sum_{i=1}^N h(X^i)$\medskip
     \STATE Initialize $\Phi_i=\Phi_{\text{prev},i}$ for $i=1$ to $N$
     \medskip
     \FOR {$l=1$ to  $L$}
     \STATE Calculate $\Phi_i= \sum_{j=1}^N T_{ij} \Phi_j + \epsilon (h(X^i)-\hat{h}^{(N)})$\medskip
     \STATE Calculate $\Phi_i = \Phi_i - \frac{1}{N}\sum_{j=1}^N \Phi_j$
     \ENDFOR
     \STATE Calculate 
      \[
      u^i = \frac{-\beta}{2\epsilon}\sum_{j=1}^N \left[T_{ij}(\Phi_j + \epsilon(h(X^j) 
      - \hat{h}^{(N)}))\left(X^j - \sum_{k=1}^N T_{ik}X^k\right)\right]
      \]
 \end{algorithmic}
 \label{alg:kernel}
\end{algorithm}

\subsection{Kernel-based approximation}
\label{sec:kernel}

An alternate algorithm 
is based on approximating the semigroup of $\Delta_{\rho}$.  The semigroup, introduced in
Appendix~\ref{apdx:Poisson}, is denoted as $e^{\epsilon\Delta_\rho}$. The solution $\phi$ of the Poisson
equation~\eqref{eqn:EL_phi_prelim_vn} is equivalently expressed as,
for any fixed $\epsilon>0$,
\begin{equation}
\phi = e^{\epsilon\Delta_\rho} \phi+ \int_0^\epsilon e^{s\Delta_\rho} (h-\hat{h}) \ud s.
\label{eq:fixed1_n}
\end{equation}

For the purposes of numerical approximation, $e^{\epsilon \Delta_\rho}$ is
approximated by a finite-rank operator:
\begin{equation}
\TepsN f(x) :=\frac{1}{N}\sum_{i=1}^N \kepsN(x,X^i)f(X^i),
\label{eq:TepsN}
\end{equation}
where the kernel,
\begin{equation*}
\kepsN (x,y) = \frac{1}{\nepsN(x)}\frac{\geps(x-y)}{\sqrt{\frac{1}{N}\sum_{i=1}^N \geps(x-X^i)}\sqrt{\frac{1}{N}\sum_{i=1}^N\geps(y-X^i)}},
\end{equation*}
is expressed in terms of the Gaussian kernel $\geps
(z):={(4\pi\epsilon)}^{-\frac{d}{2}}\exp{(-\frac{|z|^2}{4\epsilon})}$
for $z\in\Re^d$,
and $\nepsN(x)$ is a normalization factor chosen such that $\TepsN
1=1$. It is shown in \cite{coifman,hein2006} that
$e^{\epsilon\Delta\pr}\approx \TepsN$ as $\epsilon \downarrow 0$ and
$N \to \infty$.

The approximation of the fixed-point problem~\eqref{eq:fixed1_n} is
obtained as
\begin{align}
\phiepsN = \TepsN \phiepsN + \epsilon(h-\hat{h}),
\label{eq:fixed-point-epsN}
\end{align}
where $\int_0^\epsilon e^{s\Delta_\pr}(h-\hat{h})\ud s \approx
\epsilon(h-\hat{h})$ for small $\epsilon>0$.   The method of successive approximation is
used to solve the fixed-point equation for
$\phiepsN$.  In a recursive simulation, the method is initialized with
the solution from the previous time-step.

The control function is obtained by taking the gradient of
the two sides of~\eqref{eq:fixed-point-epsN}.  For this purpose, it is
useful to first define a finite-rank operator: 
\begin{align*}
 & \nabla \TepsN f(x) := \frac{1}{N}\sum_{i=1}^N\nabla \kepsN(x,X^i)f(X^i) \\
 & = \frac{1}{2\epsilon}\left[\frac{1}{N}\sum_{i=1}^N \kepsN(x,X^i)f(X^i)\left(X^i - \frac{1}{N}\sum_{j=1}^N\kepsN(x,X^j)X^j\right) \right].
\end{align*}
The control function is approximated as,
\begin{equation*}
u(x) = -\beta \nabla \TepsN \phiepsN(x) -\beta \epsilon\nabla \TepsN(h-\hat{h}^{(N)})(x),
\label{eq:uepsN}
\end{equation*}
where $\phiepsN$ on the righthand-side is the solution
of~\eqref{eq:fixed-point-epsN}.  

The overall algorithm appears in
Table~\ref{alg:kernel}.  The convergence analysis of this algorithm, as $\epsilon\downarrow 0$
and $N\rightarrow \infty$, is outside the scope of this paper and will be published
separately.

\begin{figure}[t]
  \centering
    \includegraphics[scale=0.4]{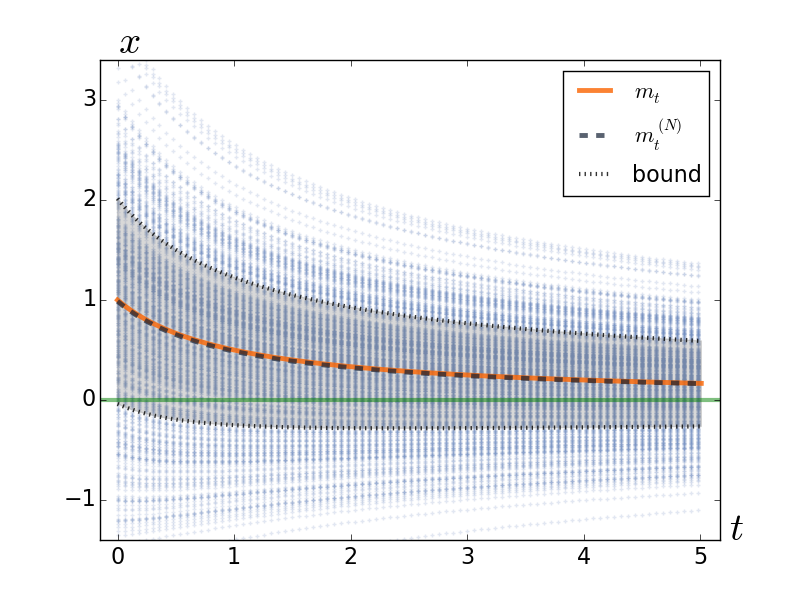}
    \vspace*{-5pt}
  \caption{Simulation results with a quadratic function $h(x) =
    \frac{1}{2}x^2$. Trajectories of $N=500$ particles is depicted as dots in the
    background.  The solid line is the mean $m_t$ obtained using the
    exact formula~\eqref{eq:exact_soln_QG} and the 
    dashed line is its empirical estimate obtained using the
    particles.  The shaded region
    depicts the $\pm 1$ standard deviation bound.}
  \label{fig:Q_convergence}
\end{figure}

\begin{figure*}
  \centering
    \hspace*{-30pt}
    \subfigure[]{
    \includegraphics[scale=0.3]{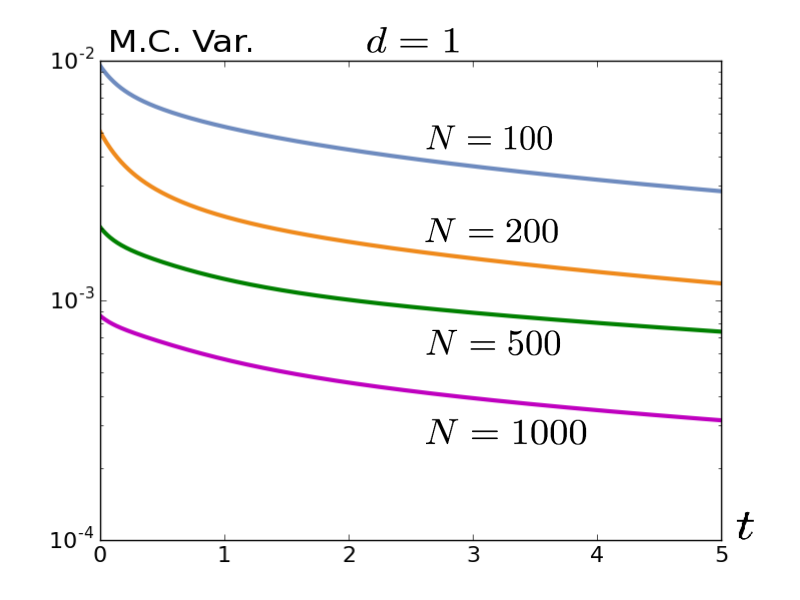}}
    \hspace*{-5pt}
    \subfigure[]{
    \includegraphics[scale=0.3]{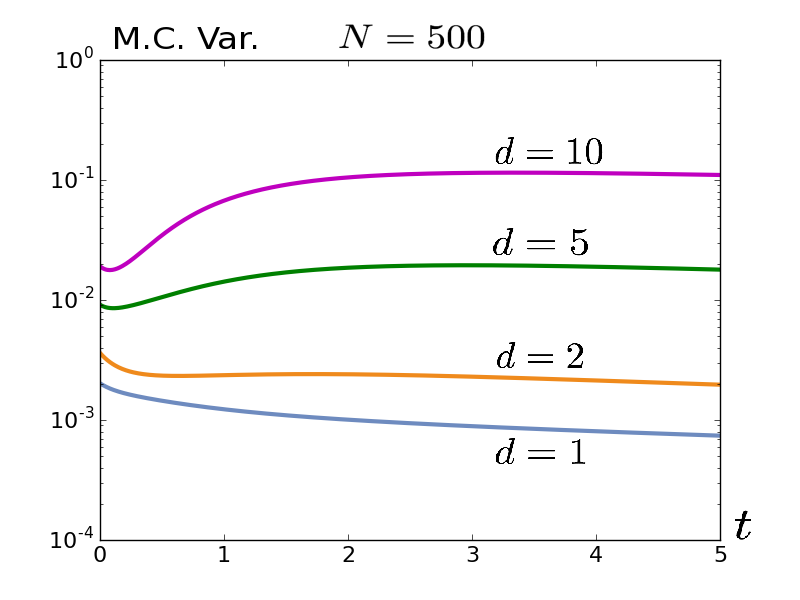}}
    \hspace*{-20pt}
  \caption{Comparison of the M.C. variance of the estimated mean as a function of the number of particles $N$ (part~(a)) and 
  the dimension $d$ (part~(b)).}
  \label{fig:Q_MC_var}
\end{figure*}

\begin{figure}
  \centering
    \includegraphics[scale=0.4]{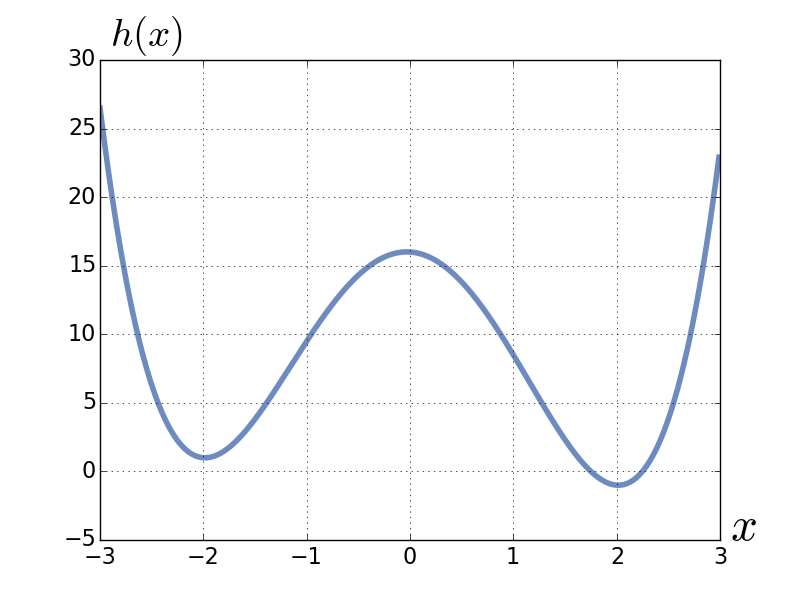}
    \vspace*{-5pt}
  \caption{Double-well potential.}
  \label{fig:DW_fn}
\end{figure}

\section{Numerical Examples} 
\label{sec:numerics}

Results of numerical experiments are presented next.  The purpose is
to illustrate the algorithms with simple examples.  A comprehensive
numerical study on benchmark problems including comparisons with
other algorithms will be a subject of a separate publication.  

\subsection{Quadratic Gaussian Case}

\def\MC{J}

\begin{figure}
  \centering
    \includegraphics[scale=0.32]{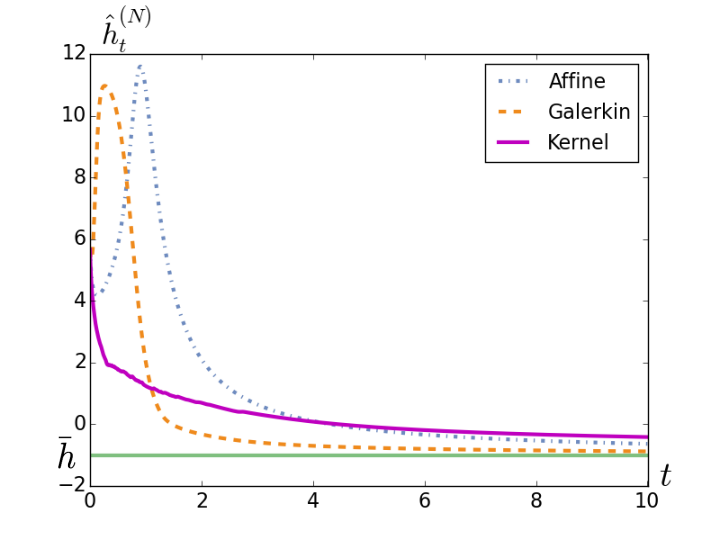}
    \vspace*{-5pt}
  \caption{Comparison of $\hat{h}_t^{(N)}$ with the three types of
    approximate control laws.}
  \label{fig:DW_h_compare}
\end{figure}

Consider the quadratic function $h(x) = \frac{1}{2}\,|x|^2$ for $x
\in \Re^d$.  For a quadratic function with a Gaussian prior, the
optimal solution is known in closed-form (see \Sec{sec:gaussian}).  

The simulation parameters are as follows: The simulations are carried
out over a finite time-horizon $[0,T]$ with $T=5$, a fixed time step
$\Dt=0.01$, and the parameter $\beta=1$.  An Euler discretization is used
to numerically integrate the ode~\eqref{eqn:particle_dyn}. At each discrete time step, the
algorithm in Table~\ref{alg:affine} is used to approximate the affine
control law. The filter is initialized with
samples drawn i.i.d. from the Gaussian distribution $\clN(m_0,
\Sigma_0)$, where $m_0 = (1, ..., 1) \in \Re^d$ and $\Sigma_0 =
\text{diag}(1,...,1)$.

Figure~\ref{fig:Q_convergence} depicts a typical simulation result for $d=1$ and $N=500$.  The empirical mean $m_t^{(N)}$
is seen to closely match its mean-field limit $m_t$ obtained using the exact
formula~\eqref{eq:exact_soln_QG}. 

Figure~\ref{fig:Q_MC_var} depicts the results of Monte Carlo
experiments: The part~(a) is a plot of Monte Carlo (M.C.) variance for
estimated mean as the number of particles $N$ is varied with $d=1$ fixed, and
the part~(b) is the corresponding plot as $d$ is varied with $N=500$
fixed.  
The M.C. variance is defined as:
\begin{align*}
  \text{M. C. Var}(m_t^{(N)})  & := \frac{1}{\MC} \sum_{j=1}^\MC
  |m_{t,j}^{(N)} - \frac{1}{\MC} \sum_{j=1}^{\MC} m_{t,j}^{(N)} |^2
\end{align*}   
with $\MC=100$ independent runs used in the experiment.

\subsection{Double-Well Potential}

Consider a double-well potential $h(x) = (x-2)^2(x+2)^2 -
\frac{x}{2}$, as depicted in Figure~\ref{fig:DW_fn}.  
Figure~\ref{fig:DW_h_compare} depicts a comparison of $\hat{h}_t^{(N)}=N^{-1}\sum_i
h(X_t^i)$ with the three types of approximate control laws: 
the affine approximation given in Table~\ref{alg:affine}, the Galerkin
approximation given in
Table~\ref{alg:galerkin}, and the kernel approximation given in
Table~\ref{alg:kernel}.  With the optimal control,
Theorem~\ref{thm:exact} shows that $\hat{h}_t$ decreases monotonically
as a function of time.  
This was indeed found to be the case with
the kernel-based algorithm but not so with the other two.  Even though
the particles in all three cases eventually converge to the correct
equilibrium (see Fig.~\ref{fig:DW_particle_compare}), 
the approximate nature of the control can lead to a transient growth of $\hat{h}_t^{(N)}$. 

\begin{figure*}
  \centering
    \hspace*{-30pt}
    \subfigure[]{
    \includegraphics[scale=0.30]{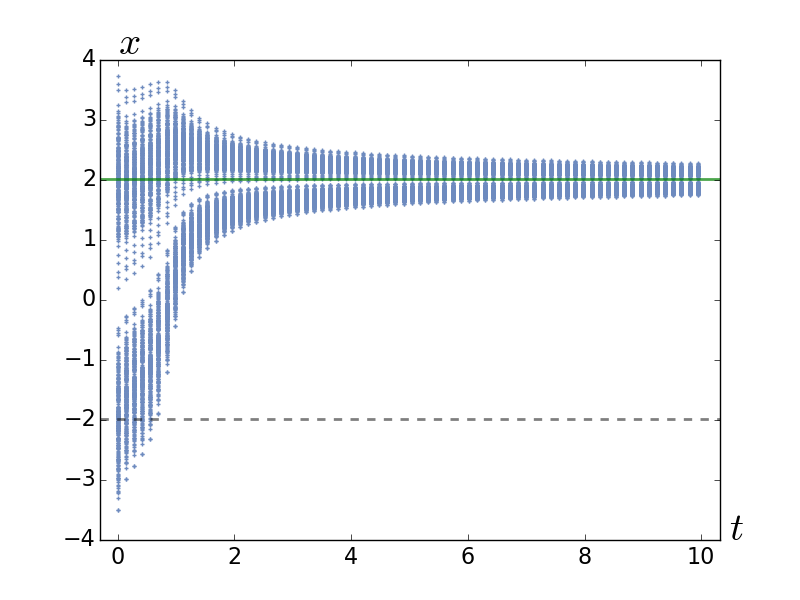}}
    \hspace*{-25pt}
    \subfigure[]{
    \includegraphics[scale=0.30]{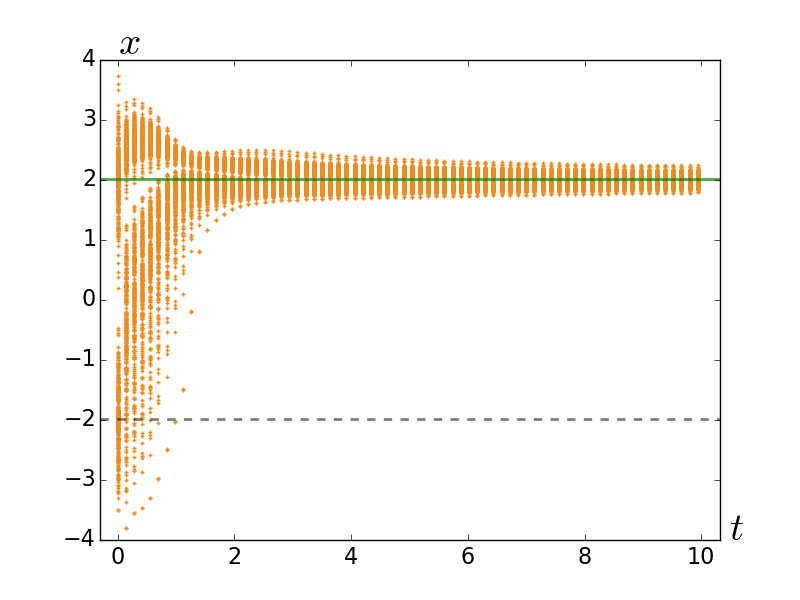}}
    \hspace*{-25pt}
    \subfigure[]{
    \includegraphics[scale=0.30]{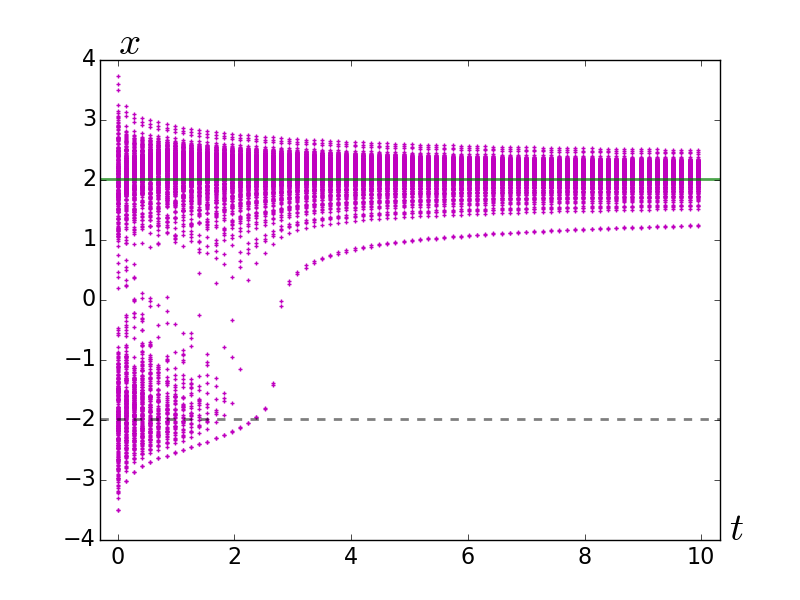}}
    \hspace*{-20pt}
  \caption{Particle trajectories with the three approximate control
    laws: (a) affine control, (b) Galerkin control, and (c)
    kernel-based control.  The global and the local minimizers are
    depicted via the solid and dashed lines, respectively. }
  \label{fig:DW_particle_compare}
\end{figure*}

For each simulation, $N=500$ particles are
used.  The initial particles $X_0^i$ are sampled i.i.d. from a mixture
of two Gaussians, $\clN(-2,\,0.6^2)$ and $\clN(2,\,0.6^2)$, with equal
weights.  An Euler discretization is used to numerically integrate the
ode~\eqref{eqn:particle_dyn} with $\Dt=0.01$ and $\beta=1$.
For the Galerkin approximation, the basis
functions are $ \text{span} \Big\{ x, ~\cos\big( \frac{2\pi}{10}\,x
\big),~ \sin\big( \frac{2\pi}{10}\,x \big) \Big\}$. For the kernel
approximation, the parameter $\epsilon = 0.5$.  The Galerkin
approximation can suffer from numerical instability on account of
ill-conditioning of the matrix $A$.  This can lead to relatively large
values of control requiring small time-steps for numerical integration.

\appendix
\subsection{Poisson's Equation}
\label{apdx:Poisson}

This section includes background on the existence-uniqueness results for
the Poisson equation~\eqref{eqn:EL_phi_intro}.  
The appropriate function space for the solution is the co-dimension 1 subspace 
$L^2_0(\Re^d,\pr):=\{\phi \in L^2(\Re^d,\pr); \int \phi \rho \ud x =0\}$ and $H^1_0(\Re^d,\pr) :=\{\phi \in
H^1(\Re^d,\pr); \int \phi \rho \ud x =0\}$; cf.~\cite{laugesen15,Tao_Automatica}. 

A function $\phi \in H_0^1(\R^d;\rho)$ is said to be a weak solution
of the Poisson's equation~\eqref{eqn:EL_phi_intro} if
\begin{equation}
\int \nabla \phi (x) \cdot \nabla \psi(x) \rho(x) \ud x = \int  (h(x)-\hat{h}) \psi(x) \rho(x) \ud x \label{eq:Poisson-weak}
\end{equation}
for all $\psi \in H^1(\R^d;\rho)$. 

\medskip

\begin{theorem}[Theorem 2.2. in~\cite{laugesen15}]\label{thm:Poisson}
Suppose the density $\rho$ admits a spectral gap (or Poincar\'e
inequality) (\cite{bakry2013} Thm 4.6.3), i.e., $\exists\; \lambda_1
>0$ such that
\begin{equation}
\label{eq:poincare}
\int f(x)^2 \, \rho(x) \ud x \leq  \frac{1}{\lambda_1} \int |\nabla f(x)|^2 \, \rho(x) \ud x ,\quad \forall f \in H^1_0(\Re^d,\pr).
\end{equation}
Then for each $h\in L^2(\R^d;\rho)$, there exists a unique weak solution
$\phi \in H_0^1(\R^d;\rho)$ satisfying
\eqref{eq:Poisson-weak}. Moreover, the derivative of the solution is
controlled by the size of the data:
\begin{equation}
\int |\nabla\phi(x)|^2 \rho(x) \ud x  \le \frac{1}{\lambda_1} \int |h(x) -
\hat{h}|^2 \rho(x) \ud x.\label{eqn:bound1}
\end{equation}
\qed
\end{theorem}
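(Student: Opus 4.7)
The plan is to cast the weak Poisson equation as a coercive variational problem on the Hilbert space $H_0^1(\R^d;\rho)$ and invoke the Lax--Milgram theorem. Define the bilinear form
\[
a(\phi,\psi) := \int \nabla\phi(x)\cdot \nabla \psi(x)\, \rho(x)\,\ud x
\]
and the linear functional $\ell(\psi) := \int (h(x)-\hat h)\,\psi(x)\,\rho(x)\,\ud x$. The first preliminary step is to observe that, since $\int (h-\hat h)\rho\,\ud x = 0$ and $\int \nabla \phi\cdot \nabla c\,\rho\,\ud x = 0$ for any constant $c$, the variational identity \eqref{eq:Poisson-weak} over all test functions $\psi\in H^1(\R^d;\rho)$ is equivalent to the same identity restricted to $\psi\in H_0^1(\R^d;\rho)$. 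Thus without loss of generality the problem becomes: find $\phi\in H_0^1$ such that $a(\phi,\psi)=\ell(\psi)$ for all $\psi\in H_0^1$.

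Next I would verify the three hypotheses of Lax--Milgram on $H_0^1$, equipped with the standard $H^1$ inner product $\langle \phi,\psi\rangle_{H^1}:=\int \phi\psi \rho\,\ud x + \int \nabla\phi\cdot\nabla\psi\,\rho\,\ud x$. Boundedness of $a$ follows from Cauchy--Schwarz: $|a(\phi,\psi)|\le \|\nabla\phi\|_2\|\nabla\psi\|_2 \le \|\phi\|_{H^1}\|\psi\|_{H^1}$. Coercivity is where the Poincar\'e inequality~\eqref{eq:poincare} enters: for $\phi\in H_0^1$,
\[
\|\phi\|_{H^1}^2 = \|\phi\|_2^2 + \|\nabla\phi\|_2^2 \;\le\; \Bigl(1+\tfrac{1}{\lambda_1}\Bigr)\|\nabla\phi\|_2^2 = \Bigl(1+\tfrac{1}{\lambda_1}\Bigr) a(\phi,\phi),
\]
so $a$ is coercive with constant $\lambda_1/(1+\lambda_1)$. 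Boundedness of $\ell$ again uses Cauchy--Schwarz and Poincar\'e: $|\ell(\psi)|\le \|h-\hat h\|_2\|\psi\|_2 \le \lambda_1^{-1/2}\|h-\hat h\|_2\|\nabla\psi\|_2$, which is controlled by $\|\psi\|_{H^1}$. Lax--Milgram then yields existence of a unique $\phi\in H_0^1$ satisfying the weak equation.

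Finally, I would obtain the quantitative bound~\eqref{eqn:bound1} by testing with $\psi=\phi$ itself in \eqref{eq:Poisson-weak}:
\[
\|\nabla\phi\|_2^2 = \int (h-\hat h)\,\phi\,\rho\,\ud x \;\le\; \|h-\hat h\|_2\,\|\phi\|_2 \;\le\; \tfrac{1}{\sqrt{\lambda_1}}\,\|h-\hat h\|_2\,\|\nabla\phi\|_2,
\]
and dividing through by $\|\nabla\phi\|_2$ (if nonzero; otherwise the bound is trivial) and squaring delivers $\|\nabla\phi\|_2^2 \le \lambda_1^{-1}\|h-\hat h\|_2^2$. The only real subtlety in the argument is the reduction to $H_0^1$; beyond that the proof is a textbook application of Lax--Milgram, and I do not anticipate any genuine obstacle given that the Poincar\'e inequality is assumed outright.
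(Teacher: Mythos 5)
Your proposal is correct, and it is essentially the standard argument behind this result: the paper itself states Theorem~\ref{thm:Poisson} without proof, importing it from Theorem~2.2 of~\cite{laugesen15}, where the Poincar\'e inequality~\eqref{eq:poincare} is used in exactly this way to make the Dirichlet form coercive on $H^1_0(\R^d;\rho)$ and the energy estimate~\eqref{eqn:bound1} is obtained by testing with $\phi$ itself. Your reduction of the test space from $H^1$ to $H^1_0$ and the Lax--Milgram (equivalently, Riesz representation, since the form is symmetric) step are sound, so there is no gap relative to the cited proof.
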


\medskip

An alternate but equivalent approach to obtain the solution
of~\eqref{eqn:EL_phi_intro} is to first note that the weighted
Laplacian $\Delta_\rho :=\frac{1}{\rho}\nabla \cdot (\rho\nabla )$ is
the infinitesimal generator of a Markov semigroup, denoted in this
paper as $e^{\epsilon \Delta_{\rho}}$; cf.,~\cite{bakry2013}.  In
terms of this semigroup, the Poisson's
equation~\eqref{eqn:EL_phi_intro} is equivalently expressed as, for
fixed $\epsilon > 0$,
\begin{equation}
\phi = e^{\epsilon\Delta_\rho} \phi+ \int_0^\epsilon e^{s\Delta_\rho} (h-\hat{h}) \ud s.
\label{eq:fixed1}
\end{equation}
If the density $\rho$ admits a spectral gap (i.e.,~\eqref{eq:poincare}
holds for some $\lambda_1>0$) then $e^{\epsilon\Delta_\rho}$ is a contraction
on $L^2_0(\Re^d,\pr)$ and a unique solution exists by the contracting mapping
theorem. 

The two formulations for obtaining the solution,
viz.,~\eqref{eq:Poisson-weak} and~\eqref{eq:fixed1}, inform the two
algorithms for numerically approximating the control law.  These
algorithms are presented in \Sec{sec:Galerkin} and \Sec{sec:kernel}, respectively.

\subsection{Gradient flow}
\label{apdx:evolution}
\newP{Proof of Theorem \ref{thm:evolution}}
As $\Delta t_n\downarrow0$, $\rho^{(\bN)}(x,t) \rightarrow
p^*(x,t)$, the posterior density defined
in~\eqref{eqn:evolution}. By direct substitution, it is verified
that $p^*$ is a solution of the replicator pde~\eqref{eq:replicator}. 

In the conference version of this paper (see~\cite{Chi_CDC13}), the
replicator pde is derived based on variational analysis.  The main
steps of the variational proof are as follows:
\begin{romannum}
\item By taking the first variation of the
  functional~\eqref{eqn:obj_fn}, the minimizer $\rho_n$ is shown to
  satisfy the E-L equation:
\begin{equation}
\int \frac{\rho_n}{\rho_{n-1}} \nabla\cdot (\rho_{n-1}\,\varsigma)\ud x 
- \Delta t_n\; \beta \int \nabla h\cdot\varsigma \rho_n\ud x = 0,
\label{eq:EL1}
\end{equation}
for each vector field $\varsigma \in L^2(\Re^d;\rho_{n-1})$.
\item Given any $C^1$ smooth and compactly supported (test) function $f$, let
  $\xi_n\in L^2(\Re^d;\rho_{n-1})$ be the solution of
\begin{equation}
\nabla\cdot(\rho_{n-1}\xi_n)=(f-\hat{f}_{n-1})\rho_{n-1},
\label{eqn:equation1}
\end{equation}
where $\hat{f}_{n-1} := \int f \rho_{n-1}\ud x$.  Then, using the E-L
equation~\eqref{eq:EL1},
\[
\hat{f}_n-\hat{f}_{n-1} = \Delta t_n\; \beta  \int \nabla h\cdot\xi_n \rho_n\ud x ,
\]
and upon summing,
\begin{equation}
\hat{f}_{\bN} = \hat{f}_0 + \beta \sum_{n=1}^{\bN} \Delta t_n\int \nabla h\cdot\xi_n \rho_n \ud x.
\label{eqn:sum_up}
\end{equation}
\item Integrating by parts and using~\eqref{eqn:equation1},
\[
\int \nabla h\cdot\xi_n \rho_n \ud x =  -\int
h(f-\hat{f}_{n-1})\rho_{n-1}\ud x + {\cal E}_n,
\]
where the error term ${\cal E}_n = O(\Delta
t_n)$. Equation~\eqref{eqn:sum_up} thus becomes
\[
 \hat{f}_{\bN} = \hat{f}_0 - \beta \sum_{n=1}^{\bN} \Delta
t_n \int h(f-\hat{f}_{n-1})\rho_{n-1}\ud x + \sum_{n=1}^{\bN} \Delta
t_n \mathcal{E}_n .
\]
\item On taking the limit as $\Delta t_n\downarrow0$, the limiting
  density $p^*(x,t)$ satisfies
\begin{align*}
\hat{f}_t & = \hat{f}_0-\beta \int_0^t\int h(x)(f(x)-\hat{f}_s)p^*(x,s) \ud x \ud s \\
& = \hat{f}_0-\beta \int_0^t\int(h(x)-\hat{h}_s)f(x)p^*(x,s) \ud x \ud s
\end{align*}
for all test functions $f$, showing that $p^*(x,t)$ is a weak solution 
of the replicator pde~\eqref{eqn:evolution}.  
For additional details on these calculations, see~\cite{Chi_CDC13}. 
\end{romannum}

\subsection{Optimal control}
\label{apdx:opt-cont}
\newP{Preliminaries} Consider a functional $E:\calP \to \Re$
mapping densities to real numbers.  For a fixed $\rho\in \calP$, the
(G\^ateaux) derivative of $E$ is a real-valued function on $\Re^d$, and an element
of the function space $L^2(\Re^d;\rho)$.  This function is denoted as
$\frac{\partial E}{\partial\pr}(\pr,t)(x)$ for $x\in\Re^d$, and 
defined as follows:  
\begin{equation*}
\frac{\ud}{\ud t}E(\pr_t) \bigg|_{t=0}= -\int_{\Re^d}\frac{\partial E}{\partial \pr}(\pr)(x) \nabla \cdot(\pr(x)u(x))\ud x,
\end{equation*} 
where $\pr_t$ is a path in $\calP$ such that $\frac{\partial
  \pr_t}{\partial t} =- \nabla \cdot(\pr_t u)$ with $\rho_0=\rho$, and $u$
is any arbitrary vector-field on $\Re^d$.  
Similarly, $\frac{\partial^2 E}{\partial \pr^2}(\pr)\in
L^2(\Re^d\times \Re^d)$ is the second (G\^ateaux) derivative of the
functional $E$ if 
\begin{equation*}
\frac{\ud}{\ud t}\frac{\partial E}{\partial \pr}(\pr_t)(x)\bigg|_{t=0} 
= - \int_{\Re^d}\frac{\partial^2 E}{\partial \pr^2}(\pr)(x,y)\nabla \cdot(\pr(y)u(y))\ud y.
\end{equation*}

The optimal control problems~\eqref{eq:opt-cont-const}
and~\eqref{eq:opt-cont-const-infinite} are examples of the mean-field
type control problem introduced in~\cite{bens-master}.  The notation
and the methodology for the following proofs is based in part on~\cite{bens-master}.

\newP{Proof of Theorem~\ref{thm:opt-cont}}
The value function $V(\pr,t)$, defined in \eqref{eq:val_fn_def}, is the solution of the DP equation: 
\begin{equation}
\begin{aligned}
\frac{\partial V}{\partial t}(\pr,t) &+ \inf_{u\in L^2}~ {\sf H}(\pr,\frac{\partial V}{\partial \pr}(\pr,t),u) = 0,\quad t \in [0,T),\\
V(\pr,T) &= \beta \int h(x)\pr(x)\ud x.
\end{aligned}
\label{eq:dyn-prog-apdx}
\end{equation} 
In the following, we use the notation \[\Theta=\Theta(\rho,t)(x):=\frac{\partial V}{\partial
  \pr}(\rho,t)(x).\]  For a fixed $\rho\in\calP$ and
$t\in[0,T)$, $\Theta$ is a function on $\Re^d$.  

A necessary condition is obtained by considering the first variation
of ${\sf H}$. Suppose $u$ is a minimizing control function.
Then $u$ satisfies the first order optimality condition:
\begin{equation*}
\left. \frac{\ud}{\ud \epsilon}{\sf H}(\pr,\Theta,u +\epsilon v) \right|_{\epsilon=0} = 0,
\end{equation*}
where $v$ is an arbitrary vector field on $\Re^d$. 
Explicitly, 
\begin{equation*}
\int \nabla(-\frac{1}{\pr}\nabla \cdot (\pr u) + \Theta ) \cdot v \;\pr \ud x = 0,
\end{equation*}
or in its strong form
\begin{equation*}
-\frac{1}{\pr}\nabla \cdot (\pr u) + \Theta = (\text{constant}).
\end{equation*} 
Multiplying both sides by $\pr$ and integrating yields the
value of the constant as $\int \Theta(\pr,t)(x) \pr(x) \ud x
=:\hat{\Theta}(\pr,t)$.  Therefore, the minimizing control solves the pde
\begin{equation*}
\frac{1}{\pr}\nabla \cdot (\pr u) = \Theta -\hat{\Theta}.
\end{equation*} 
On substituting the optimal control law into the DP
equation~\eqref{eq:dyn-prog-apdx}, the HJB equation for
the value function is given by
\begin{equation*}
\begin{aligned}
\frac{\partial V}{\partial t}(\pr,t) &+\frac{\beta^2}{2} \int
|h-\hat{h}|^2\pr \ud x  \\
& -\frac{1}{2} \int(\Theta(\pr,t)-\hat{\Theta}(\pr,t))^2\pr \ud
x=0,\quad  t \in [0,T),\\
V(\pr,T) &= \beta \int h(x)\pr(x)\ud x.
\end{aligned}
\end{equation*}
The equation involves both $V$ and $\Theta$.  One obtains the
so-called master equation (see~\cite{bens-master}) involving only
$\Theta$ by differentiating with respect to $\pr$
\begin{equation*}
\begin{aligned}
&\frac{\partial \Theta}{\partial t}(\pr,t)(x)+\frac{\beta^2}{2}|h(x)-\hat{h}|^2 -\frac{1}{2}|\Theta(\pr,t)(x)-\hat{\Theta}(\pr,t)|^2 \\ 
&- \int (\Theta(\pr,t)(y)-\hat{\Theta}(\pr,t))\frac{\partial
  \Theta}{\partial \pr}(\pr,t)(y,x)\pr (y)\ud y=0,\; t \in [0,T),\\
&\Theta(\pr,T) = \beta h.
\end{aligned}
\end{equation*}
It is easily verified that $\Theta(\pr,t)=\beta h$ solves
the master equation. The corresponding value function
$V(\pr,t)=\beta \int h \pr \ud x$.

\newP{Sufficiency} The proof that the proposed control law is a
minimizer is as follows.  Consider any arbitrary control law
$v_t$ with the resulting density $\rho_t$. Taking the time derivative of $-\beta \int h\pr_t\ud x$:
\begin{align*}
- \beta \frac{\ud}{\ud t} \int h\pr_t\ud x&= \beta \int h\;\nabla \cdot (\pr_t v_t)\ud x \\
&= \int \beta(h-\hat{h}_t) \; (\frac{1}{\pr_t}\nabla \cdot (\pr_t
v_t))\; \pr_t\ud x \\
& \leq \int \left(\frac{\beta^2}{2}(h-\hat{h})^2 + \frac{1}{2} \left|
    \frac{1}{\pr_t}\nabla \cdot(\pr_t v_t) \right|^2\right) \pr_t \ud x\\ 
& = {\sf L}(\pr_t,v_t).
\end{align*}
On integrating both sides with respect to time,
\begin{align*}
\beta \int_{\Re^d} h\pr_0 \ud x & \;\leq\; \int_0^T{\sf L}(\pr_t,v_t) \ud t\; + \;\beta\int_{\Re^d} h \pr_T \ud x,
\end{align*}
where the equality holds with $v_t=u_t$ (defined as solution
of~\eqref{eq:opt-cont-law}). Therefore,  
\begin{equation*}
J(u)= \beta \int h \pr_0\ud x \leq J(v).
\end{equation*}
This also confirms that $V(\pr,t)=\beta \int h \pr \ud x$ is the value
function, and completes the proof of Theorem~\ref{thm:opt-cont}. \qed

\medskip

The analysis for the infinite horizon optimal control
problem~\eqref{eq:opt-cont-const-infinite} is similar and described next.

\newP{Proof of Theorem \ref{thm:opt-cont-infinite}} 
The infinite-horizon value function $V^{\infty}(\pr) := \mathop{\inf}_{u}
\int_0^\infty {\sf L}(\pr_t,u_t)\ud t$ is a solution of the DP equation:
\begin{equation}
\begin{aligned}
\inf_{u\in L^2}~{\sf H}(\pr,\Theta^{\infty}(\pr),u)= 0,
\end{aligned}
\label{eq:DP_static}
\end{equation}
where $\Theta^{\infty}(\pr):=\frac{\partial V^{\infty}}{\partial
  \pr}(\pr)$.  By carrying out the first order analysis in an
identical manner, it is readily verified
that:
\begin{romannum}
\item A minimizing control $u$ is a solution of the pde~\eqref{eq:opt-cont-law}; 
\item $V^{\infty}(\pr)=\beta \int h \pr \ud x - \beta h(\bar{x})$ is a solution of
the DP equation~\eqref{eq:DP_static}. 
\end{romannum}

The sufficiency also follows similarly.  With any arbitrary
control $v_t$,
\begin{equation*}
\beta\int_{\Re^d} h\pr_0\ud x \leq \int_0^\infty {\sf L}(\pr_t,v_t)\ud t + \beta\limsup_{t \to \infty} \int_{\Re^d} h\pr_t\ud x , 
\end{equation*}
with equality if $v_t=u_t$ solves the pde~\eqref{eq:opt-cont-law}. 
Using the boundary condition, $\limsup_{t \to \infty} \int h\pr_t \ud
x = h(\bar{x})$, 
\begin{equation*}
J(u)=\beta \int h \pr_0 \ud x - \beta h(\bar{x}) \leq J(v).
\end{equation*} \qed

\newP{Proof of Lemma \ref{lem:L2norm}}  Suppose $\phi$ is the unique
solution of the Poisson equation~\eqref{eqn:EL_phi_intro}
(Theorem~\ref{thm:Poisson} in Appendix~\ref{apdx:Poisson}).  
Then $u=- \beta \nabla\phi$ is a particular solution of the
pde~\eqref{eq:opt-cont-law}.  The general solution is then given by
$u= v - \beta \nabla \phi$ where $v$ is a null solution, i.e.,
$\nabla \cdot(\pr v) =0$.  The $L^2$ optimality of the gradient
solution follows from the simple calculation:
\begin{align*}
\int |u|^2\pr \ud x &= \int \beta^2|\nabla\phi|^2\pr \ud x + \int |v|^2\pr \ud x -2\beta \int v \cdot \nabla \phi \pr \ud x\\
&= \beta^2\|\nabla \phi\|_2^2 + \|v\|^2_2,
\end{align*}
because $\int \nabla \phi \cdot v \pr \ud x = -\int \phi \nabla
\cdot(\pr v)\ud x = 0$. \qed

\subsection{Hamiltonian formulation}\label{apdx:Ham}

The Hamiltonian ${\sf H}$ is defined
in~\eqref{eq:H_Defn}. Suppose $u_t$ is the optimal control
and $\rho_t$ is the corresponding optimal trajectory.  Denote the
trajectory for the co-state (momentum) as ${\sf q}_t$.  Using the
Pontryagin's minimum principle, $(\rho_t,{\sf q}_t)$ satisfy the
following Hamilton's equations:
\begin{align*}
\frac{\partial \pr_t}{\partial t} &= \frac{\partial {\sf H}}{\partial
  \rho}(\rho_t,{\sf q}_t,u_t),\quad \rho_0 = p^*_0, \\
\frac{\partial {\sf q}_t}{\partial t} &= -\frac{\partial {\sf H}}{\partial
  {\sf q}}(\rho_t,{\sf q}_t,u_t),\quad {\sf q}_T = \frac{\partial}{\partial \rho} \left(\beta \int h(x)\rho(x)\ud x\right), \\
0 &= {\sf H}(\rho_t,{\sf q}_t,u_t) = \min_{v \in L^2} {\sf
  H}(\rho_t,{\sf q}_t,v). 
\end{align*} 

The calculus of variation argument in the proof of Theorem
\ref{thm:opt-cont} shows that the minimizing control $u_t$ solves the
first order optimality equation
\begin{equation}\label{eq:first-order-opt}
\frac{1}{\rho_t}\nabla \cdot (\rho_tu_t) = {\sf q}_t - \hat{\sf q}_{t},
\end{equation} 
where $\hat{\sf q}_t:=\int {\sf q}_t(x) \rho_t(x)\ud x$.  

The explicit
form of the Hamilton's equations are obtained by explicitly evaluating
the derivatives along the optimal trajectory:
\begin{align*}
\frac{\partial {\sf H}}{\partial \rho}(\rho_t,{\sf q}_t,u_t) &= - \nabla \cdot (\rho_t u_t),\\
\frac{\partial {\sf H}}{\partial {\sf q}}(\rho_t,{\sf q}_t,u_t)&=
\frac{\beta^2}{2}(h-\hat{h}_t)^2 - \frac{1}{2}({\sf q}_t-\hat{\sf q}_t)^2. 
\end{align*} 

It is easy to verify that 
${\sf q}_t \equiv  {\beta} h(x)$
satisfies both the boundary condition and the evolution equation for
the momentum.
This results in a simpler form of the Hamilton's equations:
\begin{align*}
\frac{\partial \pr_t}{\partial t}  &= - \nabla \cdot (\rho_t u_t),\\
0 &= {\sf H}(\rho_t, {\scriptstyle\beta} h,u_t) = \min_{v \in L^2} {\sf
  H}(\rho_t, {\scriptstyle\beta} h,v).
\end{align*}

In a particle filter implementation, the minimizing control $u_t=-\nabla\phi$ is obtained by
  solving the first order optimality
  equation~\eqref{eq:first-order-opt} with ${\sf q}_t=\beta h$. 

\subsection{Bayes' exactness and convergence}\label{apdx:exact}

Before proving the Theorem~\ref{thm:exact}, we state and prove the
following technical Lemma:

\medskip

\begin{lemma}\label{lem:spec_est}
Suppose the prior density $p_0^*(x)$ satisfies Assumption (A1) and the
objective function $h(x)$ satisfies assumption (A2).  Then for each
fixed time $t\geq 0$:
\begin{romannum}
\item The posterior density $p^*(x,t)$, defined according to
  \eqref{eqn:evolution}, admits a spectral bound;
\item The objective function $h\in L^2(\Re^d;p^*(\cdot,t))$.
\end{romannum}
\end{lemma}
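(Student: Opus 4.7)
My plan is to write $p^*(x,t)$ in Gibbs form and then apply, for part (i), a classical spectral-gap criterion, while settling part (ii) by a direct calculation. Write $p^*(x,t) = e^{-W(x,t)}$ where
\begin{equation*}
W(x,t) := V_0(x) + \beta t\, h(x) + \log Z_t, \qquad Z_t := \int p_0^*(y)\, e^{-\beta h(y) t}\,\ud y.
\end{equation*}
Before anything else I would record that $Z_t \in (0,\infty)$: positivity is immediate from (A1), while finiteness follows from (A3), since $h(x) \geq \bar{h}$ gives $e^{-\beta h(x) t} \leq e^{-\beta \bar{h} t}$.

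Part (ii) is then a one-line estimate. Using the same pointwise bound,
\begin{equation*}
\int h(x)^2\, p^*(x,t)\, \ud x = \frac{1}{Z_t}\int h(x)^2\, p_0^*(x)\, e^{-\beta h(x) t}\, \ud x \;\leq\; \frac{e^{-\beta \bar{h} t}}{Z_t} \int h(x)^2\, p_0^*(x)\, \ud x,
\end{equation*}
which is finite since $h \in L^2(\Re^d; p_0^*)$ by (A2).

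For part (i), the plan is to verify the standard criterion (see~\cite{bakry2013}): a density $e^{-W}$ with $W \in C^2$ admits a Poincar\'e inequality provided $|\nabla W|^2 - \Delta W \to \infty$ as $|x|\to\infty$. The regularity of $W(\cdot,t)$ follows from (A1)-(A2). For the Laplacian, $\Delta W = \Delta V_0 + \beta t \Delta h$ is bounded since both $D^2 V_0$ and $D^2 h$ lie in $L^\infty$. For the gradient, the radial decomposition
\begin{equation*}
\nabla W(x) \cdot \frac{x}{|x|} = \nabla V_0(x) \cdot \frac{x}{|x|} + \beta t\, \nabla h(x) \cdot \frac{x}{|x|}
\end{equation*}
tends to $+\infty$ since the first term diverges by (A1) and the second is bounded below by (A2); Cauchy--Schwarz then gives $|\nabla W(x)| \to \infty$, so $|\nabla W|^2 - \Delta W \to \infty$ and the criterion applies.

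The main obstacle I anticipate is the unboundedness of $h$: a naive perturbation argument expressing $p^*(\cdot,t)$ as a bounded reweighting of $p_0^*$ fails because $e^{-\beta h t}$ need not be bounded away from zero. The intrinsic route through the $|\nabla W|^2 - \Delta W$ criterion sidesteps this by treating $W(\cdot,t)$ as a whole and exploiting the fact that the growth/regularity conditions (A1)-(A2) are preserved (qualitatively) under the affine-in-time combination defining $W$.
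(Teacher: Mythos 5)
Your proposal is correct and follows essentially the same route as the paper: you write $p^*(\cdot,t)=e^{-W(\cdot,t)}$ with $W=V_0+\beta t h+\log Z_t$, check that the time-$t$ potential inherits the (A1)-type conditions (bounded Hessian, radial derivative of $W$ diverging) to invoke a Poincar\'e criterion from the Bakry et al.\ reference, and prove part (ii) by the identical bound $e^{-\beta t h(x)}\le e^{-\beta t\bar h}$ combined with $h\in L^2(\Re^d;p_0^*)$. The only cosmetic differences are that you spell out the explicit criterion $|\nabla W|^2-\Delta W\to\infty$ rather than citing the theorem with the (A1)-form hypotheses directly, and that you flag explicitly the use of the lower bound $h\ge\bar h$ (Assumption (A3), which the paper assumes throughout and likewise uses implicitly through $\bar h$).
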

\begin{proof}
Define $V_t(x) := -\log p^*(x,t) = V_0(x) + t\beta h(x) + \gamma_t$ where
$\gamma_t:=\log(\int e^{-V_0(y)-th(y)}\ud y)$.  It is directly verified that $V_t \in C^2$ with $D^2 V_t \in L^\infty$ and 
$\liminf_{|x| \to \infty}\;\;\nabla V_t(x) \cdot \frac{x}{|x|} =
\infty$.  Therefore, the density $p^*(x,t)$ admits a spectral
bound~[Thm 4.6.3 in~\cite{bakry2013}].  The function $h$ is
square-integrable because 
\[\int |h(x)|^2p^*(x,t)\ud x \leq e^{-\beta t\bar{h} - \gamma_t}\int |h(x)|^2e^{-V_0(x)}\ud x<\infty.\]
\qed
\end{proof}

\medskip

\newP{Proof of Theorem \ref{thm:exact}} 
Given any $C^1$ smooth and compactly supported (test) function $f$,
using the elementary chain rule,
\[
\ud f(X_t^i) = -\beta \nabla \phi(X_t^i) \cdot \nabla f(X_t^i).
\]
On integrating and taking expectations,
\begin{align*}
\expect{f(X_t^i)} & = \expect{f(X_0^i)} -\beta \int_0^t \expect{\nabla
  \phi(X_s^i) \cdot \nabla f(X_s^i)} \ud s\\
& = \expect{f(X_0^i)}  -\beta \int_0^t \expect{(h(X_s^i) - \hat{h}_s) f(X_s^i)} \ud s,
\end{align*}
which is the weak form of the replicator pde~\eqref{eq:replicator}.
Note that the weak form of the Poisson
equation~\eqref{eq:Poisson-weak}  is used to obtain the second
equality.  Since the test function $f$ is arbitrary, the evolution of
$p$ and $p^*$ are identical.  That the control function is
well-defined for each time follows from Theorem~\ref{thm:Poisson}
based on apriori estimates in Lemma~\ref{lem:spec_est} for $p^*=p$.  

\medskip

The convergence proof is presented next.  The proof here is somewhat
more general than needed to prove the Theorem.  For a function $h$, we
define the minimizing set:
\[
A_{0}:=\{x\in\Re^d \mid h(x) = \bar{h}\},
\]
where it is recalled that $\bar{h}=\inf_{x\in\Re^d} h(x)$. 
In the following it is shown that for {\em any} open neighborhood $U$
of $A_0$,
\begin{equation}
\liminf_{t \to \infty} \int_{U} p(x,t) \ud x = 1.
\label{eq:weakconv_of_p}
\end{equation}
It then follows that $X_t^i$ converges
in distribution 
where the limiting distribution is supported on $A_0$~[Thm. 3.2.5
in~\cite{durrett2010}].  If the minimizer is unique (i.e.,
$A_0=\{\bar{x}\}$), $X^i_t$ converges to $\bar{x}$ in probability.

The key to prove the convergence is the following property of the
function $h$: 

\newP{(P1)} For each $\delta > 0$, $\exists\;\epsilon > 0$ such that: 
\begin{equation*}
|h(x)-\bar{h}| \leq \epsilon\quad \Rightarrow \quad \text{dist}(x,A_0)\leq \delta\quad \forall x\in \Re^d,
\end{equation*}
where $\text{dist}(x,A_0)$ denotes the Euclidean distance of point
$x$ from set $A_0$.  If the
minimizer $\bar{x}$ is unique, it equals $|x-\bar{x}|$.
 
\medskip

Any lower semi-continuous function satisfying Assumption (A3) also
satisfies the property (P1): Suppose $\{x_n\}$ is a sequence such that
$h(x_n)\rightarrow \bar{h}$. Then $\{x_n\}$ is compact because
$h(x)>\bar{h}+r$ outside some compact set.  Therefore, the limit set
is non-empty and because $h$ is lower semi-continuous, for any limit
point $z$, $\bar{h} \leq h(z) \leq \liminf_{x_n\rightarrow z}
h(x_n)=\bar{h}$.  That is, $z\in A_0$.

The proof for~\eqref{eq:weakconv_of_p} is based on construction of a
Lyapunov function: Denote $A_\epsilon:=\{x\in\Re^d \mid h(x)
\leq \bar{h} + \epsilon\}$ where $\epsilon>0$.
By property (P1), given any open neighborhood $U$ containing $A_{0}$, $\exists \; \epsilon > 0$
such that $A_\epsilon \subset U$. 
A candidate Lyapunov function $V_{A_\epsilon}(\mu): =
-\beta^{-1}\log(\mu(A_\epsilon))$ is defined for measure $\mu$ with everywhere
positive density.  By construction $V_{A_\epsilon}(\mu) \geq 0$ with
equality iff $\mu(A_\epsilon)=1$. 

Let $\mu_t$ be the probability measure associated with $p(x,t)$, i.e,
$\mu_t(B):=\int_B p(x,t)\ud x$  for all Borel measurable set $B\subset
\Re^d$.  Since $p(x,t)$ is a solution of the replicator pde,
\begin{equation*}
\begin{aligned}
\frac{\ud}{\ud t} V_{A_\epsilon}(\mu_t)&= \frac{\ud}{\ud t} \big[ -\frac{1}{\beta}\log(\mu_t(A_\epsilon)) \big] \\ 
& = \frac{1}{\mu_t(A_\epsilon)} \int_{A_\epsilon} (h(x)-\hat{h}_t)\ud  \mu_t(x) 
\\
& = (1-\mu_t(A_\epsilon))\left( \frac{\int_{A_\epsilon} h\ud \mu_t}{\mu_t(A_\epsilon)}
  - \frac{\int_{A_\epsilon^c} h\ud \mu_t}{\mu_t(A_\epsilon^c)}\right)\\
  &\leq 0
\end{aligned}
\label{eq:lyp-ineq}
\end{equation*}
with equality iff $\mu_t(A_\epsilon)=1$. 

For the objective function $h$, a direct calculation also shows:
\begin{align*}
\frac{\ud}{\ud t} \int h(x)p(x,t)\ud x &= -\beta \int
h(x)(h(x)-\hat{h}_t)p(x,t)\ud x \\&= -\beta \int (h(x)-\hat{h})^2p(x,t)\ud x \leq 0,
\end{align*}
with equality iff $h = \hat{h}$ almost everywhere (with respect to
the measure $\mu_t$). \qed

\subsection{Quadratic Gaussian case}
\label{apdx:quadratic-Gaussian}

\newP{Proof of Lemma~\ref{lem:u-lin}} We are interested in obtaining
an explicit solution of the Poisson equation,
\begin{equation}
-\nabla\cdot(\rho(x) \nabla\phi(x)) = (h(x)-\hat{h})\rho(x).
\label{eq:PE_App_LG}
\end{equation}
Consider the solution ansatz:
\begin{equation}
\nabla\phi(x) = \K (x-m) + b,
\label{eq:QGansatz}
\end{equation}
where the matrix $\K=\K^T \in \Re^{d\times d}$ and the vector $b\in
\Re^d$ are determined as follows:
\begin{romannum}
\item Multiply both sides of~\eqref{eq:PE_App_LG} by vector $x$ and
  integrate (element-by-element) by parts to obtain
\begin{equation}
b =  \int x (h(x) - \hat{h}) \rho(x) \ud x.
\label{eq:b-integration-by-parts}
\end{equation}
\item Multiply both sides of~\eqref{eq:PE_App_LG} by matrix $(x-m)(x-m)^T$ and
  integrate by parts to obtain
\begin{equation}
\Sigma {\sf K} + {\sf K} \Sigma  = \int (x - m) (x-m)^T (h(x) -
   \hat{h}) \rho(x) \ud x.
\label{eq:K-integration-by-parts}
\end{equation}
\end{romannum}

We have thusfar not used the fact that the density $\rho$ is Gaussian
and the function $h$ is quadratic.  In the following, it is shown that
the solution thus defined in fact {\em solves} the pde~\eqref{eq:PE_App_LG}
under these conditions.  

A radially unbounded quadratic function is of the general form:
\begin{equation*}
h(x)=\frac{1}{2}(x-\bar{x})^TH(x-\bar{x}) + c
\end{equation*}
where the matrix $H = H^T \succ 0$ and $c$ is some constant.  For a
Gaussian density $\rho$ with mean $m$ and variance $\Sigma\succ 0$,
the integrals are explicitly evaluated to obtain
\begin{subequations}
\begin{align}
b & =  \int x (h(x) - \hat{h}) \rho(x) \ud x = \Sigma H(m-\bar{x}),
\label{eq:bexplict}\\
\Sigma {\sf K} + {\sf K} \Sigma  & = \int (x - m) (x-m)^T (h(x) -
   \hat{h}) \rho(x) \ud x = \Sigma H\Sigma. 
\label{eq:Kexplict}
\end{align}   
\end{subequations}
A unique positive-definite symmetric solution ${\sf K}$ exists for the
Lyapunov equation~\eqref{eq:Kexplict} because $\Sigma\succ 0$ and
$\Sigma H\Sigma\succ 0$~\cite{dullerud2013course}.  

On substituting the solution~\eqref{eq:QGansatz} into the
Poisson equation~\eqref{eq:PE_App_LG} and dividing through by $\rho$,
the two sides are:
\begin{align*}
& -\frac{1}{\rho}\nabla\cdot(\rho \nabla\phi)  = (x-m)^T\Sigma^{-1}(\K(x-m)+b) - \tr(\K),\\
& h - \hat{h}  = \frac{1}{2}(x-\bar{x})^TH(x-\bar{x}) -
\frac{1}{2}(m-\bar{x})^TH(m-\bar{x}) - \frac{1}{2}\tr(H\Sigma).
\end{align*}
where $\tr(\cdot)$ denotes the matrix trace. Using
formulae~\eqref{eq:bexplict}-\eqref{eq:Kexplict} for $b$ and $\K$, the
two sides are seen to be equal. 
\qed

\medskip

\newP{Proof of Proposition~\ref{prop:Gaussian}}
Using the affine control law~\eqref{eqn:particle_filter_lin}, the
particle filter is a linear system with a Gaussian prior:
\begin{equation}
\frac{\ud X^i_t}{\ud t} = -\beta \K_t(X^i_t-m_t) -\beta b_t,\quad
X^i_0\sim {\cal N}(m_0,\Sigma_0).
\label{eq:Xti_G}
\end{equation}
Therefore, the density of $X_t^i$ is Gaussian for all $t>0$.  The
evolution of the mean is obtained by taking an expectation of both sides
of the ode~\eqref{eq:Xti_G}:
\[
\frac{\ud}{\ud t}\expect{X^i_t}= -\beta b_t = -\beta \expect{X_t^i (h(X_t^i) - \hat{h}_t)},
\]
where~\eqref{eq:b-integral} is used to obtain the second equality.
The equation for the variance $\Sigma_t$ of $X_t^i$ is similarly obtained:
\begin{align*}
\frac{\ud \Sigma_t}{\ud t} & = -\beta (\K_t\Sigma_t + \Sigma_t\K_t)
\\
& = -\beta{\sf E} \left[ (X_t^i - m_t) (X_t^i-m_t)^T (h(X_t^i) -
   \hat{h}_t)\right], 
\end{align*}
where~\eqref{eq:K-integral} has been used.\qed

\medskip

\newP{Proof of Corollary~\ref{cor:Gaussian}} The closed-form
odes~\eqref{eq:Gaussian_KF_m} and~\eqref{eq:Gaussian_KF_s} are obtained by using
explicit formulae~\eqref{eq:bexplict} and~\eqref{eq:Kexplict} for $b$ and
$\K$, respectively.  
\qed
\subsection{Parametric case}
\label{apdx:parametric}
\newP{Proof of Theorem~\ref{prop:natural-grad}} The natural gradient
ode~\eqref{eq:natural-grad} is obtained by applying the chain
rule.  In its parameterized form, the density $p(x,t) =
\Varrho(x;\theta_t)$ evolves according to the replicator pde:
\[
\frac{\partial \Varrho}{\partial t} (x;\theta_t)  =  -\beta \,
(h(x)-\hat{h}_t) \,\Varrho(x;\theta_t).
\]
Now, using the chain rule,
\[
\frac{\partial \Varrho}{\partial t} (x,\theta_t) = \Varrho(x,\theta_t)
\left[ \frac{\partial
  }{\partial \vartheta} ( \log \Varrho (x;\theta_t) ) \right]^T \frac{\ud \theta_t}{\ud t},
\]
where $\frac{\partial }{\partial \vartheta} \left( \log \Varrho
\right)$ and $\frac{\ud \theta_t}{\ud t}$ are both $M\times 1$ column
vectors.  Therefore, the replicator pde is given by
\[
\left[ \frac{\partial
  }{\partial \vartheta} ( \log \Varrho (x;\theta_t) ) \right]^T
  \frac{\ud \theta_t}{\ud t} \Varrho(x;\theta_t) = - \beta (h(x)-\hat{h}_t) \Varrho(x;\theta_t) .
\]
Multiplying both sides by the column vector $\frac{\partial }{\partial
  \vartheta} \left( \log \Varrho \right)$, integrating over the
domain, and using the definitions~\eqref{eq:Fisher} of the Fisher information matrix $G$
and~\eqref{eq:grad-e} for $\nabla e$, one obtains
\[
G_{(\theta_t)} \frac{\ud \theta_t}{\ud t} = -\beta  \nabla e(\theta_t).
\]
The ode~\eqref{eq:natural-grad} is obtained because $G$ is assumed
invertible. 
\qed
\subsection{Galerkin approximation error}
\label{apdx:Galerkin}

\newP{Spectral representation} Under Assumptions (A1)-(A2), the spectrum of $-\Delta_\pr$
is known to be discrete with an ordered sequence of
eigenvalues $0=\lambda_0<\lambda_1\le\lambda_2\le\hdots$ and
associated eigenfunctions $\{e_n\}$ that form a complete orthonormal
basis of $L^2(\Re^d,\pr)$ [Corollary~4.10.9 in ~\cite{bakry2013}]. 
As a result, for $m,l\in \mathbb{Z}^+$:
\[
\big<e_k,e_l\big> = \delta_{kl},\quad
\big<\nabla e_k, \nabla e_l\big> =\lambda_m \delta_{kl}.
\]
The
trivial eigenvalue $\lambda_0=0$ with associated eigenfunction
$e_0(x)=1$.  On the subspace of zero-mean functions, the spectral representation yields: For $\phi\in L^2_0(\Re^d,\pr)$,
\begin{equation}
\label{eq:spectral_rep}
-\Delta_\rho \phi(x) = \sum_{k=1}^\infty \lambda_k \big<e_k,\phi\big>e_k(x).
\end{equation}

\newP{Proof of Proposition \ref{prop:galerkin}} By the triangle
inequality, 
\[
\|\nabla \phi-\nabla\phi^{(M,N)}\|_{2} \; \le\;
\underbrace{\|\nabla\phi-\nabla\phi^{(M)}\|_2}_{\text{bias}} \;+\; \|\nabla\phi^{(M)}-\nabla\phi^{(M,N)}\|_{2}. 
\]
The estimates for the bias and for the error due to the empirical
approximation are as follows:

\newP{Bias} 
Using the spectral representation~\eqref{eq:spectral_rep}, because $h-\hat{h}\in L^2_0$,
\[
\phi(x) = - \Delta_{\rho}^{-1} (h-\hat{h})(x) = \sum_{k=1}^{\infty} \frac{1}{\lambda_k}\big<e_k,h\big>e_k(x).
\]
With basis functions as eigenfunctions,
\[
\phi^{(M)}(x) = \sum_{k=1}^{M} \frac{1}{\lambda_k}\big<e_k,h\big>e_k(x).
\]
Therefore,
\begin{align*}
\|\nabla\phi - \nabla\phi^{(M)}\|^2_2 &= \sum_{k=M+1}^\infty \frac{1}{\lambda_k^2}
|\big<e_k,h\big>|^2 \|\nabla e_k\|_2^2\\
&= \sum_{k=M+1}^\infty\frac{1}{\lambda_k^2}
|\big<e_k,h\big>|^2\lambda_k \;\le\; \frac{1}{\lambda_M} \|h-\Pi_Sh\|^2_2,
\end{align*}
where $\Pi_S h(x):=\sum_{k=1}^{M} \big<e_k,h\big>e_k(x)$ denotes the
projection of $h$ onto $S$.

\newP{Empirical error} Suppose $\{X^i\}_{i=1}^N$ are drawn i.i.d. from
the density $\rho$.  The empirical solution is obtained as:
\begin{equation*}
\phiMN(x) = \sum_{k=1}^M \frac{1}{\lambda_k}\left(\frac{1}{N}\sum_{i=1}^N e_k(X^i)h(X^i)\right)e_k(x),
\end{equation*}
and the error,
\begin{equation*}
\phiMN(x)-\phiM(x) = \sum_{k=1}^M \frac{1}{\lambda_k}z_k^{(N)}e_k(x),
\end{equation*}
where $z_k^{(N)}:=\frac{1}{N}\sum_{i=1}^N e_k(X^i)h(X^i)- \big<e_k,h\big>$. 
Therefore,
\begin{equation}
\|\nabla\phiMN-\nabla\phiM\|_2^2 = \sum_{k=1}^M
\frac{1}{\lambda_k}|z_k^{(N)}|^2=:\epsilon_{N},
\label{eq:eps_app}
\end{equation}
where $\big<\nabla e_k,\nabla e_l\big>=\lambda_k \delta_{kl}$ is  used to
simplify the cross-terms.  Finally, by
applying the Law of Large Numbers (LLN) for the random variable $z_k^{(N)}$,
$\epsilon_{N}\stackrel{\text{a.s.}}{\rightarrow}0$ as $N\rightarrow \infty$. The LLN
applies because
\[\expect{|z_k^{(N)}|}\leq 2\,\expect{|e_k(X)h(X)|}\leq 2 \, \|h\|_2<\infty.\]

\newP{Variance} Under additional restrictions on $h$, one can obtain sharper estimates.
For example, taking the expectation of both sides of~\eqref{eq:eps_app},
\begin{align*}
\expect{\|\nabla\phiM-\nabla\phiMN\|_2^2} = \sum_{k=1}^M \frac{\expect{|z_k^{(N)}|^2}}{\lambda_k}.
\end{align*}
Now, $\expect{|z_k^{(N)}|^2} = \frac{\text{Var}(e_k(X^i)h(X^i))}{N}$.
Therefore, supposing $h\in L^\infty$, 
\begin{equation*}
\expect{\|\phiM-\phiMN\|_2^2} \leq \frac{\|h\|_\infty^2}{N}{\sum_{k=1}^M\frac{1}{\lambda_k}},
\end{equation*}
because $\text{Var}(e_k(X^i))=1$.  

\medskip

In summary, for bounded functions $h$,
\begin{equation*}
\expect{\|\nabla\phi - \nabla\phiMN\|_2}\; \leq \;\underbrace{\frac{
    \|h-\Pi_Sh\|_2}{\sqrt{\lambda_M}}}_{\text{bias}} \; +\;  
    \underbrace{\frac{\|h\|_\infty}{\sqrt{N}}\sqrt{\sum_{k=1}^M\frac{1}{\lambda_k}}}_{\text{variance}}. 
\end{equation*}\qed

\bibliographystyle{plain}
\bibliography{Optimization_abbr,ref,fpfbib}

\end{document}